\DeclareMathAlphabet{\mathcal}{OMS}{cmsy}{m}{n}
\setlist[enumerate,1]{label={(\arabic*)}}
\setlist[enumerate,2]{label={(\roman*)}}
\ifpdf  \usepackage[pdftex,bookmarks=false]{hyperref}
\else   \usepackage[dvips,bookmarks=false]{hyperref}
\definecolor{darkgreen}{rgb}{0,0.45,0}
\newcommand{\intperp}{{\begin{sideways}$\!\Vdash$\end{sideways}}}
\newcommand{\iperp}{\mathbin{\intperp}}
\def\noteson{\gdef\note##1{\noindent{\color{blue}[##1]}}}
\newtheorem{mainthm}{Theorem}
\newtheorem{thm}{Theorem}[section]
\newtheorem{lem}[thm]{Lemma}
\newtheorem{prop}[thm]{Proposition}
\crefname{prop}{Proposition}{Propositions}
\theoremstyle{definition}
\newtheorem{defn}[thm]{Definition}
\newtheorem{rmk}[thm]{Remark}
\newtheorem{eg}[thm]{Example}
\newtheorem{introquestion}{Question}
\newtheorem{introeg}[introquestion]{Example}
\newcommand{\lra}       {\longrightarrow}
\newcommand{\llra}[1]   {\stackrel{#1}{\lra}}
\newcommand{\hooklongrightarrow}{\lhook\joinrel\longrightarrow}
\mathchardef\usc="2D
\newcommand{\islocal}[1]{\mathsf{is\usc{}local}_{#1}}
\newcommand{\issurj}{\mathsf{isSurj}}
\newcommand{\fib}{\mathsf{fib}}
\newcommand{\blank}{\mathord{\hspace{1pt}\text{--}\hspace{1pt}}}
\newcommand{\degg}{\mathsf{deg}}
\newcommand{\sphere}[1]{\mathbf{S}^{#1}}
\newcommand{\base}{\ensuremath{\mathsf{base}}\xspace}
\newcommand{\lloop}{\ensuremath{\mathsf{loop}}\xspace}
\newcommand{\unit}{\mathbf{1}}
\newcommand{\susp}{\mathsf{\Sigma}}
\newcommand{\N}{\mathbb{N}}
\newcommand{\Q}{\mathbb{Q}}
\newcommand{\Z}{\mathbb{Z}}
\newcommand{\ap}{\mathsf{ap}}
\newcommand{\UU}{\mathcal{U}}
\newcommand{\Prop}{\mathsf{Prop}}
\newcommand{\id}{\mathsf{id}}
\newcommand{\pr}{\mathsf{pr}}
\newcommand{\jdeq}{\equiv}
\newcommand{\defeq}{:\equiv}
\newcommand{\define}[1]{\textbf{\boldmath{#1}}}
\newcommand{\prebot}[1]{\prescript{\bot}{}{#1}}
\newcommand{\ctsym}{%
  \mathchoice{\mathbin{\raisebox{0.5ex}{$\displaystyle\centerdot$}}}%
             {\mathbin{\raisebox{0.5ex}{$\centerdot$}}}%
             {\mathbin{\raisebox{0.25ex}{$\scriptstyle\,\centerdot\,$}}}%
             {\mathbin{\raisebox{0.1ex}{$\scriptscriptstyle\,\centerdot\,$}}}
  }
\newcommand{\ct}[3][]{
  \@ifnextchar\bgroup
    {#2 \mathbin{\ctsym_{#1}} \ct[#1]{#3}}
    {#2 \mathbin{\ctsym_{#1}} #3}
  }
\def\smsym{\sum}
\newcommand{\@thesum}[1]{\smsym_{#1}}
\newcommand{\sm}[1]{\@ifnextchar\bgroup{\@sm{#1}\sm}{\@sm{#1}}}
\newcommand{\@sm}[1]{%
  \mathchoice{%
    {{\,\@thesum{#1}\;}}}{\@thesum{(#1)}}{\@thesum{(#1)}}{\@thesum{(#1)}}}
\newcommand{\dsm}[1]{{\displaystyle\sm{#1}}}
\newcommand{\@ifnextchar@starorbrace}[2]
  {\@ifnextchar*{#1}{\@ifnextchar\bgroup{#1}{#2}}}
\newcommand{\@theprd}[1]{\prod_{#1}}
\newcommand{\prd}{\@ifnextchar*{\@iprd}{\@prd}}
\newcommand{\@prd}[1]
  {\@ifnextchar@starorbrace
    {\@tprd{#1}\prd}
    {\@tprd{#1}}}
\newcommand{\@tprd}[1]{%
  \mathchoice{%
    {{\,\@theprd{#1}\,}}}{\@theprd{(#1)}}{\@theprd{(#1)}}{\@theprd{(#1)}}}
\title{Characterizations of modalities and lex modalities}
\author{J. Daniel Christensen \and Egbert Rijke}
\date{October 15, 2021}
\keywords{reflective subuniverse, localization, modality, lex modality,
factorization system, homotopy type theory.}
\begin{document}

\maketitle

\begin{abstract}
  A reflective subuniverse in homotopy type theory is an internal
  version of the notion of a localization in topology or in
  the theory of $\infty$-categories.
  Working in homotopy type theory,
  we give new characterizations of the following conditions on
  a reflective subuniverse $L$:
  (1) the associated subuniverse $L'$ of $L$-separated types is a modality;
  (2) $L$ is a modality;
  (3) $L$ is a lex modality; and
  (4) $L$ is a cotopological modality.
  In each case, we give several necessary and sufficient conditions.
  Our characterizations involve various families of maps associated to $L$,
  such as the $L$-\'etale maps, the $L$-equivalences, the $L$-local maps,
  the $L$-connected maps, the unit maps $\eta_X$, and their left and/or
  right orthogonal complements.
  More generally, our main theorem gives an overview of how all of these
  classes related to each other.
  We also give examples that show that all of the inclusions we describe
  between these classes of maps can be strict.
\end{abstract}

\tableofcontents

\setcounter{secnumdepth}{0}
\section{Introduction}

The study of localizations has a long history, beginning with work
of Sullivan~\cite{Sullivan71} and Bousfield~\cite{Bousfield75}
for topological spaces.
This framework has played a fundamental organizing role in algebraic
topology in the intervening decades, influencing and leading to the solution
of many central conjectures in the field.
Bousfield worked with model categories, but more recently the theory
of localizations was adapted to the setting of $\infty$-categories
by Lurie~\cite{lurie}.
In all cases, a localization $L$ associates to an object $X$ a new
object $LX$, with certain properties.

Independently, logicians and philosophers have considered the notion
of modalities in logic, which allow one to qualify statements to
express \emph{possibility}, \emph{necessity}, and other attributes
such as \emph{temporal} modalities.
These modalities are expressed by applying a modal operator $\Diamond$ to
a proposition $P$ to produce a new proposition $\Diamond P$, with
certain properties.
Overviews be found in~\cite{BRV}, \cite{sep-logic-modal} and~\cite{dpgm:modal-tt}.

Homotopy type theory is a formal system which has models
in all $\infty$-toposes \cite{KL,LS,S,dB,dBB}%
\footnote{The initiality and semantics of higher inductive types still need to be fully worked out.}.
As such, it provides a convenient way to prove theorems for all $\infty$-toposes,
with the types interpreted as \emph{objects} of an $\infty$-topos.
At the same time, types are used to encode \emph{propositions} that one
may wish to prove.
It turns out that the notion of a \emph{reflective subuniverse} in homotopy type theory
simultaneously encodes the idea of a localization of an $\infty$-topos
(see below for more information)
and common modalities studied in logic \cite{Corfield}.

Going further, homotopy type theory has been extended with modalities
to allow it to express \emph{cohesion} \cite{Myers19,Shulman18,SS14,Wellen}, 
which can capture continuous and smooth geometry.

Some reflective subuniverses are better than others.
For example, some are $\Sigma$-closed (modalities), some are left exact (lex modalities), etc.
These are key properties that a reflective subuniverse may or may not have.
Moreover, we will see below that each reflective subuniverse $L$ determines
various classes of maps, such as the $L$-equivalences and the $L$-connected maps.
The goal of this paper is to characterize the reflective subuniverses
that have these and other good properties in terms of conditions on the classes
of maps that they determine.

All of our results are stated and proved within the framework of
homotopy type theory.
It follows that similar results hold in any $\infty$-topos.
However, one must take care in interpreting the results.
For example, the interpretation of a reflective subuniverse does
not correspond to a single reflective subcategory, but rather to
a family of reflective subcategories of each slice category,
compatible with pullback (see~\cite[Appendix~A]{RSS} and~\cite{Vergura1}).
As another example, the interpretation of two maps being orthogonal,
which we denote $f \bot g$,
corresponds to the notion of two maps being \emph{internally} orthogonal
in an $\infty$-topos, which is denoted $f \iperp g$ in~\cite{ABFJ}.

\medskip

We next provide some background, and then
state our main results in an omnibus theorem.

\pagebreak

\subsection{Background}

We follow the conventions and notation of~\cite{hottbook}.
For example, we write the type of maps from $X$ to $Y$ as
either $X \to Y$ or $Y^X$.
We fix a univalent universe $\UU$.
Recall from \cite{hottbook} that a \define{reflective subuniverse} $L$ consists of a subuniverse
$\islocal{L} : \UU \to \Prop$,
a function $L : \UU \to \UU$
and a \define{localization} $\eta_X : X \to LX$ for each $X : \UU$.
Being a localization means that $LX$ is $L$-local and $\eta_X$ is
initial among maps whose codomain is local. 
We call $\eta_X$ the \define{unit} map, and when context makes it clear,
we omit the subscript.

For any reflective subuniverse $L$, it is immediate by the universal property of $L$-localizations that the operation $L$ is functorial, i.e., for any map $f:X\to Y$ there is a unique map $Lf:LX\to LY$ such that the square
\begin{equation*}
  \begin{tikzcd}
    X \arrow[r,"\eta"] \arrow[d,swap,"f"] & LX \arrow[d,"Lf"] \\
    Y \arrow[r,swap,"\eta"] & LY
  \end{tikzcd}
\end{equation*}
commutes. This square is called the \define{$L$-naturality square} of $f$. We say that a map $f$ is
\begin{itemize}
\item \define{$L$-local} if its fibres are $L$-local,
\item \define{$L$-\'etale} if its $L$-naturality square is a pullback square,
\item \define{$L$-connected} if its fibres are $L$-connected (i.e., their $L$-localizations are contractible), and
\item an \define{$L$-equivalence} if $Lf$ is an equivalence.
\end{itemize}
The notion of $L$-\'etale map is a variant of the notion of formally \'etale maps from algebraic geometry and is due to Schreiber~\cite[Definition 3.1]{SchreiberSomeThoughts}. Wellen studied the notion of formally \'etale maps extensively in his PhD thesis~\cite{Wellen}.

We recall from \cite{RSS} that a reflective subuniverse is said to be a \define{modality}%
\footnote{More properly, this should be called an ``idempotent monadic modality,''
but as these are the only ones we consider, we use the term ``modality.''}
if for any family $P$ of $L$-local types over an $L$-local type $X$, the dependent sum $\sm{x:X}P(x)$ is again $L$-local.
That is, the $L$-local types are \define{$\Sigma$-closed}.
By~\cite[Theorem~1.32]{RSS}, it is equivalent to require that the unit maps
$\eta_X$ are $L$-connected for each type $X$.
Furthermore, a reflective subuniverse $L$ is said to be \define{lex} (short for \define{left exact}) if the operation $L$ preserves pullbacks.

By a \define{class of maps}, we mean a subtype of $\sm{X, Y : \UU} Y^X$
defined by a predicate valued in $\Prop$.
Recall that a class of maps $\mathcal{L}$ is left orthogonal to a class of maps $\mathcal{R}$, if for every $f:A\to B$ in $\mathcal{L}$ and for every $g:X\to Y$ in $\mathcal{R}$, the square
\begin{equation*}
  \begin{tikzcd}
    X^B \arrow[r,"f^\ast"] \arrow[d,swap,"g_\ast"] & X^A \arrow[d,"g_\ast"] \\
    Y^B \arrow[r,swap,"f^\ast"] & Y^A
  \end{tikzcd}
\end{equation*}
is a pullback.
More generally, if $\mathcal{M}$ is a class of maps, we will write $\mathcal{M}^\bot$ for the class of maps that are right orthogonal to $\mathcal{M}$, and we will write $\prebot{\mathcal{M}}$ for the class of maps that are left orthogonal to $\mathcal{M}$. So we have $\mathcal{R}\subseteq\mathcal{L}^\bot$ and $\mathcal{L}\subseteq\prebot{\mathcal{R}}$, if $\mathcal{L}$ is left orthogonal to $\mathcal{R}$.

An \define{orthogonal factorization system} on $\UU$ is a pair $(\mathcal{L},\mathcal{R})$ of classes of maps in $\UU$
such that $\mathcal{L}=\prebot{\mathcal{R}}$, $\mathcal{R}=\mathcal{L}^\bot$, and every map $h:X\to Y$ factors as a left map followed by a right map. Every modality gives rise to two orthogonal factorization systems:
\begin{itemize}
\item The \define{stable factorization system}, in which the left class $\mathcal{L}$ consists of the $L$-connected maps, and the right class $\mathcal{R}$ consists of the $L$-local maps.  (See~\cite[Section~1]{RSS} and~\cite{ABFJ}.)
\item The \define{reflective factorization system}, in which the left class $\mathcal{L}$ consists of the $L$-equiv\-a\-lences, and the right class $\mathcal{R}$ consists of the $L$-\'etale maps.  (See~\cite[Theorem~7.2]{CR}.)
\end{itemize}
However, both of these pairs of classes of maps fail to be orthogonal factorization systems in the more general case of a reflective subuniverse.
Our main result will give a thorough understanding of how these four classes of maps relate.

We also recall that a type $X$ is said to be \define{$L$-separated} if its identity types are $L$-local. In~\cite{CORS} it was shown that the subuniverse of $L$-separated types is again a reflective subuniverse, which we will call $L'$.
The unit maps for $L'$ are written $\eta' : X \to L'X$,
and are always surjective~\cite[Lemma~2.17]{CORS}.
When $L$ is a modality, $L'$ is also a modality.
One reason for studying the reflective subuniverse $L'$ is that many
properties that hold for a lex reflective subuniverse also hold for
any reflective subuniverse if they are stated using $L$ and $L'$ together;
this observation can be used to prove results about reflective
subuniverses that are not lex~\cite{CORS}.

Finally, we recall from~\cite{RSS} that a reflective subuniverse is said to be \define{accessible} if it can be presented as the subuniverse of $f$-local types, for a family of maps $\{f_i:A_i\to B_i\}_i$ in $\UU$, indexed by a type $I$ in $\UU$. Here, a type $X$ is said to be \define{$f$-local} if the precomposition map
\begin{equation*}
  f_i^\ast : (B_i \to X) \lra (A_i \to X)
\end{equation*}
is an equivalence for each $i:I$. For any family of maps $f$ in $\UU$, the subuniverse of $f$-local types is a reflective subuniverse. Moreover, if each $B_i$ is contractible, then this reflective subuniverse is a modality. In this case, the modal operator $X\mapsto LX$ is called \define{nullification}.
When $L$ is accessible, $L'$ is also accessible and is presented by the
family $\{ \susp f_i \}$ of suspensions of the maps presenting $L$.
See~\cite{RSS} and~\cite{CORS} for these results.

\subsection{Main result}

We collect together many of the results of this paper into one theorem.%
\footnote{We repeat this theorem on the last two pages for quick reference
and convenient printing.}
After this, we show how the large diagram looks in the case of the
$n$-truncation modality.

\ifthenelse{\boolean{end}}{\thispagestyle{empty}}{}
\renewcommand{\themainthm}{A}
\begin{mainthm}\label{thm:main}
  Let $L$ be a reflective subuniverse of $\UU$.
  Consider the following diagram:
  \begin{equation*}
    \begin{tikzcd}[column sep=0pt]
      & \prebot{\{L\text{-\'etale maps}\}} & \{L\text{-\'etale maps}\} \arrow[d,hook,"j_1"] \\
      & \{L\text{-equivalences}\} \arrow[u,equals,"i_1"] & \{L\text{-equivalences}\}^\bot \arrow[d,hook,"j_2"] \\
      & \prebot{(\{f_i\}^\bot)} \arrow[u,hook,"i_2"] & \{f_i\}^\bot \arrow[d,hook,"j_3"] \\
      & \prebot{(\{\eta_X\}^\bot)} \arrow[u,hook,"i_3"] & \{\eta_X\}^\bot \arrow[d,hook,"j_4"] \\
      & \prebot{\{L\text{-local maps}\}} \arrow[u,hook,"i_4"] & \{L\text{-local maps}\} \arrow[d,hook,"j_5"] \arrow[dr,hook,dashed,"j_6"]\\
      \prebot\{L'\text{-\'etale maps}\} \arrow[ur,hook,"i_6"]
      & \{L\text{-connected maps}\} \arrow[u,equals,"i_5"] & \{L\text{-connected maps}\}^\bot  \arrow[dr,hook,"j_7"]
      & \{L'\text{-\'etale maps}\} \arrow[d,hook,"j_8"] \\
      \{L'\text{-equivalences}\} \arrow[u,equals,"i_8"] \arrow[ur,hook,"i_7"]
      & & & \{L'\text{-equivalences}\}^\bot \ar[d, draw=none, "\raisebox{+1.5ex}{\vdots}" description] \\
      {} \ar[u, draw=none, "\raisebox{+1.5ex}{\vdots}" description] & & & {}
    \end{tikzcd}
    \vspace*{-11pt} %
  \end{equation*}
  The arrows should be interpreted as implications
  between propositions.  For example, $j_1$ means that
  for any reflective subuniverse $L$, if $f$ is $L$-\'etale, then it is
  right-orthogonal to every $L$-equivalence.
  The row concerning the family of maps $\{f_i\}$ only applies if the reflective
  subuniverse is presented by $\{f_i\}$, but the composites $i_2 \circ i_3$
  and $j_3 \circ j_2$ apply in general.
  The diagram continues infinitely downwards, with $L$ replaced by $L'$,
  the family $\{ f_i \}$ replaced by the family $\{ \susp f_i \}$ of suspensions,
  and the subscripts all increased by $7$ in each subsequent column.

  \ifthenelse{\boolean{end}}{\newpage}{}
  \ifthenelse{\boolean{end}}{\thispagestyle{empty}}{}
  \setlist{itemsep=2pt} %
  \begin{enumerate}
  \setlist{itemsep=0pt} %
  \item\label{it:prebot}
  The equalities $i_{1+7n}$ and $i_{5+7n}$ hold. Consequently,
  each class on the left is obtained from its mirror image class on the right by
  applying $\prebot{(-)}$.
  (However, it is not always the case that the class on the right is obtained
  by applying $(-)^{\bot}$ to the class on the left.)
  \item\label{it:inclusions}
  All of the inclusions, except possibly the dashed inclusions $j_{6+7n}$, hold.
  \pagebreak[2] %
  \item\label{it:L'-modality} The following are equivalent:
    \begin{enumerate}
    \item $L'$ is a modality.
    \item The inclusion $j_8$ is an equality.
    \item The inclusion $j_6$ exists.
    \end{enumerate}
  \item\label{it:L-modality} The following are equivalent:
    \begin{enumerate}
    \item $L$ is a modality.
    \item The inclusion $i_4$ is an equality.
    \item The inclusion $j_4$ is an equality.
    \item The inclusion $j_5$ is an equality.
    \end{enumerate}
    Also, when $L$ is a modality, $j_1$ is an equality and the
    conditions in \ref{it:L'-modality} hold.
    If, in addition, $L$ is presented as a nullification, then $i_3$ and $j_3$ are equalities.
  \item\label{it:lex} The following are equivalent:
    \begin{enumerate}
    \item $L$ is a lex modality.
    \item $L$ is a lex reflective subuniverse.
    \item All of the displayed vertical (non-diagonal) inclusions are equalities
          (but not necessarily $i_9$, $j_9$, etc.)
    \item The inclusion $j_4 \circ \cdots \circ j_1$ is an equality,
          i.e., every $L$-local map is $L$-\'etale.
    \item The inclusion $j_4 \circ j_3 \circ j_2$ is an equality.
    \item The inclusion $i_2 \circ i_3 \circ i_4$ is an equality,
          i.e., every $L$-equivalence is $L$-connected.
    \end{enumerate}
    Also, when $L$ is a lex modality, the conditions in \ref{it:L-modality} hold.
  \item\label{it:L=L'} The following are equivalent:
    \begin{enumerate}
    \item $L = L'$, i.e., the composite inclusion $\{ L\text{-local maps} \} \hookrightarrow \{ L'\text{-local maps} \}$ is an equality.
    \item All of the inclusions are equalities, including those not displayed.
    \item Every $L'$-\'etale map is $L$-local, i.e., there is an inclusion going
          in the opposite direction to~$j_6$.
    \item $L$ is \define{cotopological} (see \cref{se:L=L'}).
    \item $L$ is lex and every unit $\eta$ is surjective.
    \item $L$ is lex and every mere proposition is $L$-local.
    \end{enumerate}
    Also, when $L = L'$, the conditions in \ref{it:lex} hold.
  \item\label{it:strict}
    For each of the inclusions not drawn as equalities,
    including $j_6$ and those not displayed,
    there exists an accessible reflective subuniverse $L$ making the inclusion strict.
  \item\label{it:neither}
    In general, neither of $\{L\text{-connected maps}\}^\bot$ and
    $\{L'\text{-\'etale maps}\}$ includes in the other.
  \end{enumerate}
\end{mainthm}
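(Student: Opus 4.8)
The plan is to organize everything around the orthogonality calculus together with a short list of structural lemmas, and then to treat the nine items roughly in order, reducing the infinite tower to the $L$-layer via the symmetry ``replace $L$ by $L'$ and raise subscripts by $7$''. Throughout I would use that $\prebot{(-)}$ and $(-)^\bot$ are order-reversing, that $\mathcal M\subseteq\prebot{(\mathcal M^\bot)}$ and $\mathcal M\subseteq(\prebot{\mathcal M})^\bot$, and that every (reflective) subuniverse is closed under products and retracts. For item~\ref{it:prebot}, the equality $i_1$ --- that the $L$-equivalences are exactly the maps left orthogonal to the $L$-\'etale maps --- is part of the reflective prefactorization system of~\cite[Theorem~7.2]{CR}, read internally; and the equality $i_5$ --- that the $L$-connected maps are exactly the maps left orthogonal to the $L$-local maps --- I would prove directly: the inclusion of $L$-connected maps into $\prebot{\{L\text{-local maps}\}}$ is the fibrewise ``dependent elimination'' property of $L$-connected maps, and conversely, testing a map $f$ against the total-space projections $\sm{b:B} P(b)\to B$ of families $P$ of $L$-local types over the codomain shows that any $f\in\prebot{\{L\text{-local maps}\}}$ already has that property, hence is $L$-connected. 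Running the same arguments over $L,L',L'',\dots$ gives $i_{1+7n}$ and $i_{5+7n}$ and the ``continues downwards'' clause, so from then on I treat only the $L$-layer.

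\textbf{Item~\ref{it:inclusions}.} All the vertical chains reduce by order-reversal to a short list of ``seed'' inclusions on the right-hand column, which I would establish one at a time. Here $j_1$ is one half of $i_1$; the composite $j_3\circ j_2$ holds unconditionally because every unit $\eta_X$ is an $L$-equivalence, while $j_2$ and $j_3$ hold separately when $L$ is presented by $\{f_i\}$, since each $f_i$ is an $L$-equivalence and since $\eta_X$ is the left factor of $X\to\unit$ in the orthogonal factorization system generated by $\{f_i\}$; $j_4$ follows from a direct filler-plus-retract argument (if $g\colon E\to B$ is right orthogonal to every unit, then for each $b$ the canonical square from $\eta_{\fib_g(b)}$ to $g$ has a filler exhibiting $\fib_g(b)$ as a retract of the $L$-local type $L\fib_g(b)$, hence $L$-local); and $j_5$ is the familiar fact that $L$-connected maps are left orthogonal to $L$-local maps. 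Then $i_2,i_3,i_4$ come from $j_1,\,(j_3\circ j_2),\,j_4$ by applying $\prebot{(-)}$ and using $i_1$. Composing, one gets that every $L$-\'etale map is $L$-local; since a map between $L$-local types is automatically $L$-\'etale (its $L$-naturality square has two parallel equivalences, hence is a pullback), every such map is $L$-local, and applying this to the diagonals $\Delta_Y$ shows every $L$-local type is $L$-separated, so that $L$-localization factors through $L'$-localization. Combined with~\cite[Lemma~2.17]{CORS} (surjectivity of $\eta'$) and the presentation of $L'$ by $\{\susp f_i\}$, this yields the remaining left-column and diagonal inclusions $i_6,i_7,j_7,j_8$. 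The delicate point --- and, I expect, the main obstacle in this part --- is $i_6/i_7$: showing that every $L'$-equivalence is left orthogonal to every $L$-local map. The dashed $j_6$ is deliberately left out; it is pinned down in item~\ref{it:L'-modality}.

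\textbf{Items~\ref{it:L'-modality} and~\ref{it:L-modality}.} The engine is~\cite[Theorem~1.32]{RSS}: a reflective subuniverse is a modality iff its units are connected. For item~\ref{it:L-modality}, ``$L$ is a modality'' is equivalent to ``$\{\eta_X\}\subseteq\{L\text{-connected maps}\}$'', which transports through the correspondence of items~\ref{it:prebot}--\ref{it:inclusions} to the equalities $i_4$, $j_4$, $j_5$; when $L$ is a modality the stable and reflective factorization systems share their top, so $j_1$ is an equality and (recalling~\cite{CORS}) $L'$ is a modality, and if moreover $L$ is a nullification the presenting maps have contractible codomains, forcing $\{f_i\}^\bot=\{\eta_X\}^\bot$ and hence $i_3,j_3$. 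For item~\ref{it:L'-modality} one applies item~\ref{it:L-modality}'s machinery at the level of $L'$; the extra input that makes the implication reversible --- so that $j_8$ being an equality forces $L'$ to be a modality --- is the surjectivity of $\eta'$, and ``$j_6$ exists'' is equivalent because $L$-local maps are $L'$-\'etale precisely when the $L'$-reflection is left exact along $L$-local maps, which unwinds to $\Sigma$-closedness of the $L'$-local types.

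\textbf{Items~\ref{it:lex}--\ref{it:neither}.} For item~\ref{it:lex}, (a)$\Leftrightarrow$(b) amounts to saying that a lex reflective subuniverse is automatically $\Sigma$-closed, and (a)$\Leftrightarrow$(f), that lex-ness equals ``every $L$-equivalence is $L$-connected'', I would deduce from the characterizations of lex modalities in~\cite{RSS}; given lex-ness, $L$ preserves pullbacks, the reflective and stable factorization systems coincide, the whole middle of the diagram collapses to two mutually orthogonal classes, the intermediate classes are squeezed to equalities --- this is (c) --- and (d), (e) are special cases of (c) together with (f). For item~\ref{it:L=L'}, $L=L'$ is equivalent to ``every $L'$-local type is $L$-local'', which iterates to ``the tower $L,L',L'',\dots$ is constant'', i.e.\ ``all inclusions, displayed or not, are equalities''; (d) is the definition of cotopological in~\cref{se:L=L'}; and the equivalences with (c), (e) and (f) follow by combining lex-ness (item~\ref{it:lex}) with~\cite[Lemma~2.17]{CORS}. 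Item~\ref{it:strict} is where real construction is needed: for each non-equality inclusion --- including $j_6$ and the off-screen ones --- I would exhibit an accessible reflective subuniverse making it strict, and I expect the $n$-truncations (and their $L'$-iterates), nullifications at spheres and Moore spaces, and a handful of localizations at single maps (including a standard non-modality) to suffice, each example handling all layers of its inclusion at once; certifying strictness in each case, with the accompanying orthogonality bookkeeping, is the bulk of the work here. Finally, for item~\ref{it:neither} I would give one explicit example --- an $n$-truncation should do --- and check by hand that $\{L\text{-connected maps}\}^\bot$ and $\{L'\text{-\'etale maps}\}$ are incomparable.
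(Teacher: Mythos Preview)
Your overall architecture matches the paper's closely, and most of your plan would go through. However, there is a concrete error in your treatment of item~\ref{it:neither}. You propose to use a single $n$-truncation to witness that $\{L\text{-connected maps}\}^\bot$ and $\{L'\text{-\'etale maps}\}$ are incomparable, but this fails in one direction: for $n$-truncation, $L$ is a modality (so $j_5$ is an equality, giving $\{L\text{-connected maps}\}^\bot = \{n\text{-truncated maps}\}$) and $L' = (n{+}1)$-truncation is also a modality (so, by item~\ref{it:L'-modality}, $j_6$ exists). Hence for $n$-truncation one has $\{L\text{-connected maps}\}^\bot \subseteq \{L'\text{-\'etale maps}\}$, and only the \emph{other} non-inclusion is witnessed. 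To show $\{L\text{-connected maps}\}^\bot \nsubseteq \{L'\text{-\'etale maps}\}$ you need a reflective subuniverse for which $L'$ is \emph{not} a modality --- indeed, the paper's argument is precisely that this inclusion would force $j_6$ to exist --- and the paper supplies this via the degree-$k$ localizations of \cref{eg:degk}. So two genuinely different examples are required, one per direction.

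A smaller but related concern: you justify $i_1$ by appeal to the reflective (pre)factorization system of~\cite[Theorem~7.2]{CR}, but as invoked in this paper that result is stated for modalities, whereas $i_1$ must hold for an arbitrary reflective subuniverse. The paper instead proves $i_1$ from scratch (\cref{prop:L-equivalence}): a map is an $L$-equivalence iff precomposition into every $L$-local type is an equivalence, iff it is left orthogonal to every map between $L$-local types; and since an $L$-\'etale map is by definition a pullback of a map between $L$-local types (its $L$-naturality square), while conversely any map between $L$-local types is trivially $L$-\'etale, this is equivalent to being left orthogonal to every $L$-\'etale map. You should make sure your argument for $i_1$ does not implicitly use $\Sigma$-closure; otherwise the whole left column, and with it items~\ref{it:L-modality} and~\ref{it:lex}, rests on a circularity.
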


Note that a composite inclusion, such as $j_4 \circ j_3$, is an equality if and
only if each factor is.
This lets us deduce claims involving classes of maps that might be more convenient.
For example, when $L$ is accessible, $j_4 \circ j_3$ being an equality implies that
$L$ is a modality, allowing us to give a condition in terms of the generators:
if every $L$-local map is right orthogonal to the generators, then $L$ is a modality.

The proof of \cref{thm:main} occupies the remainder of the paper,
with claim ($n$) of the theorem proved in Section $n$.
In some cases, we prove additional equivalent conditions that are
not stated in \cref{thm:main} to reduce its length.
For example, as an extension of \ref{it:L'-modality},
we give additional characterizations of when $L'$ is a modality
in \cref{thm:L'-is-a-modality}.
Item~\ref{it:L=L'} is related to work of Vergura in the context of
$\infty$-toposes~\cite[Section~7.2]{Vergura1}.
We explain this further in \cref{se:L=L'} and give additional equivalent conditions
in \cref{thm:L=L'}.

\medskip

Some natural open questions remain, of which we mention two.

\begin{introquestion}\label{q:j_1}
Does $j_1$ being an equality imply that $L$ is a modality?
The converse holds by~\ref{it:L-modality}, and the ``shifted'' version holds, namely
that $j_8$ being an equality implies that $L'$ is a modality,
by~\ref{it:L'-modality}.
We give a partial answer in \cref{prop:j_1}, where we show that 
if $j_1$ is an equality and the units are surjective, then $L$ is a modality.
\end{introquestion}

\begin{introquestion}
We know that the inclusion $j_6$ does not always exist.
Is there some class in the right-hand column that always includes in
$\{L'\text{-\'etale maps}\}$?
We thank Evan Cavallo for asking this question. %
\end{introquestion}

\begin{introeg}
  We illustrate here how the large diagram looks in the case of the
  $n$-truncation modality~\cite[Section~7.3]{hottbook}, for $n \geq -1$:
  \begin{equation*}
  \small
    \begin{tikzcd}[column sep=-2pt]
      & \prebot{\{n\text{-\'etale maps}\}} & \{n\text{-\'etale maps}\} \arrow[d,equals,"j_1"] \\
      & \{n\text{-equivalences}\} \arrow[u,equals,"i_1"] & \{n\text{-equivalences}\}^\bot \arrow[d,hook,"j_2"] \\
      & \prebot{(\{S^{n+1} \to 1\}^\bot)} \arrow[u,hook,"i_2"] & \{S^{n+1} \to 1\}^\bot \arrow[d,equals,"j_3"] \\
      & \prebot{(\{\eta_X\}^\bot)} \arrow[u,equals,"i_3"] & \{\eta_X\}^\bot \arrow[d,equal,"j_4"] \\
      & \prebot{\{n\text{-truncated maps}\}} \arrow[u,equals,"i_4"] & \{n\text{-truncated maps}\} \arrow[d,equals,"j_5"] \arrow[dr,hook,"j_6"]\\
      \prebot\{(n+1)\text{-\'etale maps}\} \arrow[ur,hook,"i_6"]
      & \{n\text{-connected maps}\} \arrow[u,equals,"i_5"] & \{n\text{-connected maps}\}^\bot  \arrow[dr,hook,"j_7"]
      & \{(n+1)\text{-\'etale maps}\} \arrow[d,equals,"j_8"] \\
      \{(n+1)\text{-equivalences}\} \arrow[u,equals,"i_8"] \arrow[ur,hook,"i_7"]
      & & & \{(n+1)\text{-equivalences}\}^\bot \ar[d, draw=none, "\raisebox{+1.5ex}{\vdots}" description] \\
      {} \ar[u, draw=none, "\raisebox{+1.5ex}{\vdots}" description] & & & {}
    \end{tikzcd}
    \vspace*{-9pt} 
  \end{equation*}
  The $n$-truncation modality is discussed in
  \cref{se:inclusions,se:L-modality,se:lex,se:strict}.
  The $n$-equivalences, $n$-connected maps and $n$-truncated maps are
  familiar, and by~\cite[Theorem~3.10]{CR}, the $n$-\'etale maps are
  precisely the maps that are right orthogonal to the inclusion
  $1 \to S^{n+1}$ of the basepoint.
\end{introeg}

In \cref{se:strict}, we also discuss the case of localization away
from a natural number $k$.

\setcounter{secnumdepth}{2}

\section{\texorpdfstring{$L$}{L}-connected maps and \texorpdfstring{$L$}{L}-equivalences}\label{se:prebot}

\setlist[enumerate,1]{label={(\roman*)}}
\setlist[enumerate,2]{label={(\alph*)}}

In this section we prove \cref{thm:main}\ref{it:prebot}.
We begin by proving the equalities $i_1$ and $i_5$, i.e., that
\begin{align*}
  \{L\text{-equivalences}\} & = \prebot{\{L\text{-\'etale maps}\}} \\
  \{L\text{-connected maps}\} & = \prebot{\{L\text{-local maps}\}}.
\end{align*}
The equalities $i_{1+7n}$ and $i_{5+7n}$ for $n > 0$ follow by replacing $L$
with $L^{(n)}$, where $(-)^{(n)}$ denotes applying $(-)'$ $n$ times.

We begin with $i_1$, and include some additional equivalent conditions.

\begin{prop}\label{prop:L-equivalence}
  Let $L$ be a reflective subuniverse and
  consider a map $f:A\to B$. Then the following are equivalent:
  \begin{enumerate}
  \item\label{Leq1} The map $f$ is an $L$-equivalence.
  \item\label{Leq2} For every $L$-local type $X$, the precomposition map
    \begin{equation*}
      \blank\circ f : (B \to X) \lra (A \to X)
    \end{equation*}
    is an equivalence.
  \item\label{Leq3} The map $f$ is left orthogonal to every map between $L$-local types.
  \item\label{Leq4} The map $f$ is left orthogonal to every $L$-\'etale map.
  \end{enumerate}
\end{prop}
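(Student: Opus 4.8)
The plan is to establish the chain of equivalences $\ref{Leq1}\Leftrightarrow\ref{Leq2}\Leftrightarrow\ref{Leq3}\Leftrightarrow\ref{Leq4}$, using $\ref{Leq2}$ as the pivot, since it is the condition closest to the universal property of the localization. To see $\ref{Leq1}\Leftrightarrow\ref{Leq2}$, fix an $L$-local type $X$ and consider the square of precomposition maps
\[
  \begin{tikzcd}
    (LB\to X) \arrow[r,"(Lf)^\ast"] \arrow[d,swap,"\eta_B^\ast"] & (LA\to X) \arrow[d,"\eta_A^\ast"] \\
    (B\to X) \arrow[r,swap,"f^\ast"] & (A\to X)
  \end{tikzcd}
\]
which commutes because it is obtained by applying the functor $(\blank\to X)$ to the $L$-naturality square of $f$. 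Since $X$ is $L$-local, the vertical maps are equivalences by the universal property of $\eta_A$ and $\eta_B$, so two-out-of-three shows that $f^\ast$ is an equivalence if and only if $(Lf)^\ast$ is. Hence $\ref{Leq1}$ immediately gives $\ref{Leq2}$; for the converse, from the hypothesis that $(Lf)^\ast$ is an equivalence for every $L$-local $X$ I would extract a two-sided inverse of $Lf$ by a short argument: instantiating $X\defeq LA$ produces a map $g : LB\to LA$ with $g\circ Lf = \id_{LA}$, and instantiating $X\defeq LB$ and using that $(Lf)^\ast$ is in particular injective (applied to $Lf\circ g$ and $\id_{LB}$) gives $Lf\circ g = \id_{LB}$.

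For $\ref{Leq2}\Leftrightarrow\ref{Leq3}$: by definition, $f$ being left orthogonal to $g : X\to Y$ means the square of function spaces with horizontal maps $f^\ast$ and vertical maps $g_\ast$ is a pullback. If $\ref{Leq2}$ holds and $X$ and $Y$ are $L$-local, then both horizontal maps in that square are equivalences, and a commuting square whose two parallel edges are equivalences is automatically a pullback, so $\ref{Leq3}$ follows. Conversely, $\unit$ is $L$-local (it is a retract of $L\unit$), so applying $\ref{Leq3}$ to the map $X\to\unit$ for $L$-local $X$, and using $X^{\unit}\simeq X$, recovers the statement that $f^\ast : (B\to X)\to(A\to X)$ is an equivalence. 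For $\ref{Leq3}\Leftrightarrow\ref{Leq4}$: every map $g$ between $L$-local types is $L$-\'etale, since its $L$-naturality square has the unit maps $\eta_X,\eta_Y$ --- which are equivalences --- as its horizontal edges and so is a pullback; this gives $\ref{Leq4}\Rightarrow\ref{Leq3}$. For the other direction, if $g : X\to Y$ is $L$-\'etale, then its $L$-naturality square exhibits $g$ as a pullback of $Lg : LX\to LY$, which is a map between $L$-local types; so $\ref{Leq3}$ yields $f\bot Lg$, and since the class of maps right orthogonal to a fixed map is closed under pullback, $f\bot g$.

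All of the individual steps are short, and the two points that require a little care are the ones flagged above: the implication $\ref{Leq2}\Rightarrow\ref{Leq1}$ cannot be done by probing against a single object and needs the extraction of an explicit inverse of $Lf$ from the family of equivalences $(Lf)^\ast$, and the step $\ref{Leq3}\Rightarrow\ref{Leq4}$ depends on pullback-stability of the class $\{\,g : f\bot g\,\}$, which I would either cite as a standard fact about orthogonality or verify directly by a short diagram chase on the squares of function spaces (using that $(\blank)^A$ and $(\blank)^B$ preserve pullbacks). I do not expect a genuine obstacle here: once these two observations are in hand, the rest is bookkeeping.
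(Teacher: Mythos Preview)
Your proof is correct and follows essentially the same route as the paper's: the paper cites \cite[Lemma~2.9]{CORS} for $\ref{Leq1}\Leftrightarrow\ref{Leq2}$ where you give the direct argument via the $L$-naturality square, and for $\ref{Leq3}\Rightarrow\ref{Leq4}$ both you and the paper use that an $L$-\'etale map is a pullback of a map between $L$-local types together with pullback-stability of the right orthogonal class. The remaining steps are identical.
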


\begin{proof}
  The first two statements are equivalent by \cite[Lemma~2.9]{CORS}.
  To see that \ref{Leq2} implies \ref{Leq3}, assume that $f$ satisfies \ref{Leq2}.
  Consider a map $g: X \to Y$ between $L$-local types.
  Then we have a commuting square
  \begin{equation*}
    \begin{tikzcd}
      X^B \arrow[r] \arrow[d] & Y^B \arrow[d] \\
      X^A \arrow[r] & Y^A .
    \end{tikzcd}
  \end{equation*}
  Since both vertical maps are equivalences by~\ref{Leq2},
  the square is a pullback square, and so $f$ is left orthogonal to $g$.

  To see that \ref{Leq3} implies \ref{Leq2}, consider an $L$-local type $X$. Then $f$ is left orthogonal to the map $X\to \unit$, which is just another way of saying that \ref{Leq2} holds.

  Note that \ref{Leq3} implies \ref{Leq4} since if $f$ is left orthogonal to any map between $L$-local types, then it is also left orthogonal to any base change of a map between $L$-local types. Furthermore, \ref{Leq4} implies \ref{Leq3} since any map between $L$-local types is clearly $L$-\'etale.
\end{proof}

Next we give the equality $i_5$, again with an extra condition.

\begin{prop}\label{prop:L-connected}
  Let $L$ be a reflective subuniverse and
  consider a map $f:A\to B$. Then the following are equivalent:
  \begin{enumerate}
  \item\label{Lconn1} The map $f$ is $L$-connected.
  \item\label{Lconn2} For each family $P$ of $L$-local types over $B$, the precomposition map
    \begin{equation*}
      \blank\circ f : \Big(\prd{y:B}P(y)\Big) \lra \Big(\prd{x:A}P(f(x))\Big)
    \end{equation*}
    is an equivalence.
  \item\label{Lconn3} The map $f$ is left orthogonal to every $L$-local map.
  \end{enumerate}
\end{prop}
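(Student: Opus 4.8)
The plan is to route the proof through a statement about a single type: for any type $X$, the localization $LX$ is contractible if and only if $\const : P \to (X \to P)$ is an equivalence for every $L$-local type $P$. The forward direction follows from the universal property of $\eta_X$: if $LX$ is contractible, then the composite $P \to (X \to P) \to (LX \to P) \to P$ --- first $\const$, then extension along $\eta_X$, then evaluation at the centre of $LX$ --- is the identity on $P$, and since the last two maps are equivalences, $\const$ is one too. For the converse I would take $P \defeq LX$: the map $\const : LX \to (X \to LX)$ factors as $z \mapsto \const_z \mapsto \const_z \circ \eta_X$ through $(LX \to LX)$, where the second map $\blank \circ \eta_X$ is an equivalence by the universal property and the composite is an equivalence by hypothesis; hence $\const : LX \to (LX \to LX)$ is an equivalence, so $\id_{LX} = \const_{z_0}$ for some $z_0 : LX$, whence $\prd{w:LX}(z_0 = w)$ by function extensionality and $LX$ is contractible.

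Granting this, I would prove \ref{Lconn1} $\Leftrightarrow$ \ref{Lconn2} by a fibrewise computation. Reindexing along $A \simeq \sm{b:B}\fib_f(b)$ identifies $\prd{x:A}P(f(x))$ with $\prd{b:B}(\fib_f(b) \to P(b))$ and carries $\blank \circ f$ to the fibrewise map assembled from the constant maps $\const : P(b) \to (\fib_f(b) \to P(b))$. For \ref{Lconn1} $\Rightarrow$ \ref{Lconn2}: each fibre is $L$-connected and each $P(b)$ is $L$-local, so by the single-type statement each of these constant maps is an equivalence, hence so is the fibrewise map, i.e.\ $\blank \circ f$. For \ref{Lconn2} $\Rightarrow$ \ref{Lconn1} --- the delicate direction --- I would fix $b_0 : B$ and an $L$-local type $P$ and apply \ref{Lconn2} to the family $P'(b) \defeq (b = b_0) \to P$, which is $L$-local since $L$-local types are closed under dependent products; using the contractibility of $\sm{b:B}(b = b_0)$ one computes $\prd{b:B}P'(b) \simeq P$ and $\prd{a:A}P'(f(a)) \simeq (\fib_f(b_0) \to P)$, and --- as transport in a constant family is trivial --- $\blank \circ f$ becomes exactly $\const : P \to (\fib_f(b_0) \to P)$. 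As this holds for every $L$-local $P$, the single-type statement gives that $\fib_f(b_0)$ is $L$-connected, and since $b_0$ was arbitrary, $f$ is $L$-connected.

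Finally, \ref{Lconn2} $\Leftrightarrow$ \ref{Lconn3} is bookkeeping about orthogonality. An $L$-local map $g : X \to Y$ is equivalent over $Y$ to $\pr_1 : \sm{y:Y}Q(y) \to Y$ with $Q(y) \defeq \fib_g(y)$ an $L$-local type, and the square defining $f \bot g$ is a pullback exactly when, for every $h : B \to Y$, the induced map on fibres over $h$ --- namely $\blank \circ f : \prd{b:B}Q(h(b)) \to \prd{a:A}Q(h(f(a)))$ --- is an equivalence. Taking $Y \defeq B$ and $h \defeq \id$ exhibits this as an instance of \ref{Lconn2}, so \ref{Lconn3} $\Rightarrow$ \ref{Lconn2}; and since $Q \circ h$ is always an $L$-local family over $B$, \ref{Lconn2} $\Rightarrow$ \ref{Lconn3}. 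The step I expect to be the real obstacle is \ref{Lconn2} $\Rightarrow$ \ref{Lconn1}: the hypothesis quantifies over families on all of $B$ at once, and it genuinely does not suffice to feed in constant families, since doing so only shows --- via \cref{prop:L-equivalence} --- that $f$ is an $L$-equivalence, which need not be $L$-connected; hence the auxiliary family $P'(b) = (b = b_0) \to P$, or some equivalent device for isolating a single fibre, is essential. On the other hand, the argument never eliminates out of $LX$, so the absence of $\Sigma$-closure for a general reflective subuniverse is not an obstacle here.
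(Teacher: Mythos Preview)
Your argument is correct. For \ref{Lconn2}~$\Leftrightarrow$~\ref{Lconn3}, your reduction via fibres over $h : B \to Y$ is essentially the paper's proof, which phrases it as contractibility of lifts in squares whose bottom map is $\id_B$ (condition~\ref{Lconn2}) versus a general $j : B \to Y$ (condition~\ref{Lconn3}), reducing the latter to the former by pulling back the right-hand map along $j$.

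For \ref{Lconn1}~$\Leftrightarrow$~\ref{Lconn2}, the paper simply cites \cite[Lemma~1.36]{RSS}, whereas you supply a self-contained argument via the single-type characterization of $L$-connectedness together with the auxiliary family $P'(b) \defeq (b=b_0)\to P$. That device cleanly isolates a single fibre and sidesteps the genuine obstacle you flag: a fibrewise-induced map on dependent products can be an equivalence without each component being one, so feeding constant families into~\ref{Lconn2} is insufficient. Your route buys independence from the external reference at the cost of a little bookkeeping; the paper's citation buys brevity.
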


\begin{proof}
  The equivalence of \ref{Lconn1} and \ref{Lconn2} is Lemma~1.36 in~\cite{RSS}.
  Therefore it suffices to show that \ref{Lconn2} and \ref{Lconn3} are equivalent.

  Condition~\ref{Lconn2} is equivalent to requiring that for each family $P$
  of $L$-local types over $B$ and each map $i$ making the square
  \[
    \begin{tikzcd}
      A \arrow[d,swap,"f"] \arrow[r,"i"] & \dsm{b:B} P(b) \arrow[d,"\pr_1"] \\
      B \arrow[r,swap,"\id"] & B
    \end{tikzcd}
  \]
  commute, the type of lifts in the square is contractible.
  \ref{Lconn3} is equivalent to requiring that for each square
  \[
    \begin{tikzcd}
      A \arrow[d,swap,"f"] \arrow[r,"i"] & \dsm{y:Y} P(y) \arrow[d,"\pr_1"] \\
      B \arrow[r,swap,"j"] & Y ,
    \end{tikzcd}
  \]
  where $Y$ is a type and $P$ is a family of $L$-local types over $Y$,
  the type of lifts is contractible.
  So \ref{Lconn3} clearly implies \ref{Lconn2}.
  In the second square, the pullback of the map $\pr_1$ along $j$
  is the map $\pr_1 : \sm{b:B} P(j(b)) \to B$.
  But since lifts in the second square correspond to lifts of $f$
  against this pullback, \ref{Lconn2} implies \ref{Lconn3}.
\end{proof}

We now prove the remaining claim in \cref{thm:main}\ref{it:prebot}, namely
that each class on the left-hand-side of the diagram is obtained from its mirror
image class on the right by applying $\prebot{(-)}$.
It is enough to prove this for all but the last row, since we can take $L$ to be $L^{(n)}$.
There are only two non-trivial cases.

\begin{prop}
We have equalities
\begin{align*}
  \{L\text{-equivalences}\} & = \prebot{(\{L\text{-equivalences}\}^{\bot})} \\
  \{L\text{-connected maps}\} & = \prebot{(\{L\text{-connected maps}\}^{\bot})}.
\end{align*}
\end{prop}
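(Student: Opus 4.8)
The plan is to prove each equality by the usual two-inclusion argument, where the forward inclusion $\mathcal{L} \subseteq \prebot{(\mathcal{L}^{\bot})}$ is formal and the reverse inclusion is the content. For the first equality, note that $\{L\text{-equivalences}\} \subseteq \prebot{(\{L\text{-equivalences}\}^{\bot})}$ holds by the definition of orthogonality (every map is left orthogonal to every map that is right orthogonal to it). For the reverse inclusion, suppose $f$ is left orthogonal to every map in $\{L\text{-equivalences}\}^{\bot}$. By \cref{prop:L-equivalence}\ref{Leq4}, a map is right orthogonal to every $L$-equivalence as soon as it is $L$-\'etale, so in particular every $L$-\'etale map lies in $\{L\text{-equivalences}\}^{\bot}$. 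Hence $f$ is left orthogonal to every $L$-\'etale map, which by \cref{prop:L-equivalence} (the equivalence of \ref{Leq1} and \ref{Leq4}) means $f$ is an $L$-equivalence.

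The second equality is handled identically, replacing \cref{prop:L-equivalence} by \cref{prop:L-connected}. The forward inclusion $\{L\text{-connected maps}\} \subseteq \prebot{(\{L\text{-connected maps}\}^{\bot})}$ is again formal. For the reverse, suppose $f$ is left orthogonal to every map in $\{L\text{-connected maps}\}^{\bot}$. Every $L$-local map is right orthogonal to every $L$-connected map: this is the equivalence of \ref{Lconn1} and \ref{Lconn3} in \cref{prop:L-connected}, which says precisely that $f$ is $L$-connected iff $f$ is left orthogonal to every $L$-local map, so $L$-local maps belong to $\{L\text{-connected maps}\}^{\bot}$. Thus $f$ is left orthogonal to every $L$-local map, and so by \cref{prop:L-connected} again, $f$ is $L$-connected.

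The only real subtlety — and the step I expect to be the main (minor) obstacle to state cleanly — is the observation that $\{L\text{-\'etale maps}\}$ and $\{L\text{-local maps}\}$ are, respectively, cofinal enough inside $\{L\text{-equivalences}\}^{\bot}$ and $\{L\text{-connected maps}\}^{\bot}$ to reconstruct the full left class: the point is that although we do not know $\{L\text{-equivalences}\}^{\bot}$ equals $\{L\text{-\'etale maps}\}$ in general (the parenthetical caveat in \cref{thm:main}\ref{it:prebot}), we only need $\{L\text{-\'etale maps}\} \subseteq \{L\text{-equivalences}\}^{\bot}$, which gives the inclusion $\prebot{(\{L\text{-equivalences}\}^{\bot})} \subseteq \prebot{\{L\text{-\'etale maps}\}} = \{L\text{-equivalences}\}$ by \cref{prop:L-equivalence}, and symmetrically for the connected case. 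Once this is phrased correctly, the argument is a short chase through the two propositions already proved.
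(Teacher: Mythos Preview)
Your proof is correct and is essentially the same as the paper's, just unpacked. The paper phrases the argument once abstractly---if $\mathcal{P}=\prebot{\mathcal{Q}}$ for some class $\mathcal{Q}$, then $\prebot{(\mathcal{P}^\bot)} = \prebot{((\prebot{\mathcal{Q}})^\bot)} \subseteq \prebot{\mathcal{Q}} = \mathcal{P}$, the reverse inclusion being formal---and then invokes the equalities $i_1$ and $i_5$ (your \cref{prop:L-equivalence} and \cref{prop:L-connected}) as the required witnesses $\mathcal{Q}=\{L\text{-\'etale maps}\}$ and $\mathcal{Q}=\{L\text{-local maps}\}$. Your two-inclusion argument is exactly this Galois-connection computation written out in each case; the ``subtlety'' you flag is precisely the step $\mathcal{Q}\subseteq(\prebot{\mathcal{Q}})^\bot$ applied contravariantly, and you handle it correctly.
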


\begin{proof}
  This follows from a more general fact about orthogonality:
  if $\mathcal{P}$ and $\mathcal{Q}$ are classes of maps such that $\mathcal{P}=\prebot{\mathcal{Q}}$, then
  $\prebot{(\mathcal{P}^\bot)} = \prebot{((\prebot{Q})^\bot)} = \prebot{Q} = \mathcal{P}$.
  So the claim follows from the equalities $i_1$ and $i_5$.
\end{proof}

\section{Existence of the inclusions}\label{se:inclusions}

We show in this section that all the inclusions in \cref{thm:main} hold, i.e., we prove claim~\ref{it:inclusions} of \cref{thm:main}. Since each class on the left is the class of maps left orthogonal to every map in the corresponding class on the right, it suffices to construct only the inclusions $j_r$, as well as $i_6$, since $j_6$ does not exist in general. By \cref{prop:L-connected,prop:L-equivalence}, we already have the inclusions $j_1$, $j_5$, and $j_8$.
We proceed to prove the remaining inclusions.

In the following proposition, we construct the inclusion $j_3\circ j_2$ in the case of an arbitrary reflective subuniverse, and the inclusions $j_2$ and $j_3$ in the case of an accessible reflective subuniverse.
Despite the notation, in the non-accessible case, $j_3 \circ j_2$ is not a composite.

\begin{prop}\label{prop:j2}
  Let $L$ be a reflective subuniverse.
  Then there is an inclusion
  \[ j_3 \circ j_2 : \{L\text{-equivalences}\}^{\bot} \hooklongrightarrow \{\eta_X\}^{\bot}.\]
  Moreover, if $L$ is an accessible reflective subuniverse presented by a family of maps $\{f_i\}$,
 then there are inclusions
 $j_2 : \{L\text{-equivalences}\}^{\bot} \hookrightarrow \{f_i\}^\bot$
 and $j_3 : \{f_i\}^{\bot} \hookrightarrow \{\eta_X\}^{\bot}$.
\end{prop}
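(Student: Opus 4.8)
The plan is to derive all three inclusions from the single observation that the relevant ``source'' maps are $L$-equivalences, together with the fact that $\mathcal{A}\subseteq\mathcal{B}$ implies $\mathcal{B}^\bot\subseteq\mathcal{A}^\bot$. For $j_3\circ j_2$ and $j_2$ this will be immediate; the real content is $j_3$, where instead one must place each unit $\eta_X$ in the \emph{left} class $\prebot{(\{f_i\}^\bot)}$.

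First I would record that every unit $\eta_X:X\to LX$ is an $L$-equivalence. Since $LLX$ is $L$-local, the uniqueness clause of the universal property of $\eta_X$ identifies $L\eta_X$ with $\eta_{LX}$ — both are maps $LX\to LLX$ whose restriction along $\eta_X$ is $\eta_{LX}\circ\eta_X$ — and $\eta_{LX}$ is an equivalence because $LX$ is $L$-local; hence $L\eta_X$ is an equivalence. So $\{\eta_X\}\subseteq\{L\text{-equivalences}\}$, and applying $(-)^\bot$ gives $j_3\circ j_2$. The same idea gives $j_2$: by \cref{prop:L-equivalence} a map $g$ is an $L$-equivalence once precomposition by $g$ is an equivalence into every $L$-local type, and applying this to $g=f_i$ while recalling that an $L$-local type is by definition $f_i$-local shows each $f_i$ is an $L$-equivalence; hence $\{f_i\}\subseteq\{L\text{-equivalences}\}$ and $(-)^\bot$ yields $j_2$.

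It remains to prove $j_3$, i.e.\ $\{f_i\}^\bot\subseteq\{\eta_X\}^\bot$, for which it suffices that every $\eta_X$ lies in $\prebot{(\{f_i\}^\bot)}$. I would invoke that accessibility supplies, via the localization higher inductive type, the orthogonal factorization system $(\prebot{(\{f_i\}^\bot)},\{f_i\}^\bot)$ on $\UU$, and factor $X\to\unit$ as $X\xrightarrow{\ell}Z\xrightarrow{r}\unit$ with $\ell\in\prebot{(\{f_i\}^\bot)}$ and $r\in\{f_i\}^\bot$. As the codomain of $r$ is $\unit$, the orthogonality condition on $r$ says precisely that $Z$ is $f_i$-local for every $i$, so $Z$ is $L$-local. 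Every map between $L$-local types belongs to $\{f_i\}^\bot$ (both horizontal maps of the orthogonality square are then equivalences), so $\ell$, being left orthogonal to all of $\{f_i\}^\bot$, is left orthogonal to every map between $L$-local types, hence an $L$-equivalence by \cref{prop:L-equivalence}. Now the universal property of $\eta_X$ gives a unique $h:LX\to Z$ with $h\circ\eta_X=\ell$, and two-out-of-three for $L$-equivalences (immediate from functoriality of $L$) makes $h$ an $L$-equivalence between $L$-local types, hence an equivalence. Since orthogonality is unchanged by post-composing the left map with an equivalence, $\eta_X$ inherits left orthogonality to every $g\in\{f_i\}^\bot$, i.e.\ $g\in\{\eta_X\}^\bot$.

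The main obstacle is $j_3$: the other two inclusions are formal consequences of ``units and generators are $L$-equivalences,'' whereas $j_3$ genuinely uses the construction of the localization, and this is the one place accessibility enters. An alternative to the factorization argument above would be to cite directly that, for an accessible reflective subuniverse presented by $\{f_i\}$, each localization unit $\eta_X$ is left orthogonal to the $f_i$-local maps.
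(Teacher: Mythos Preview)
Your proof is correct and, for $j_3\circ j_2$ and $j_2$, matches the paper exactly: both rely on the observation that the units $\eta_X$ and the generators $f_i$ are $L$-equivalences, then apply $(-)^\bot$.

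For $j_3$, your route differs slightly from the paper's. The paper invokes the same orthogonal factorization system generated by $\{f_i\}$, but then cites a general result (\cite[Theorem~3.12]{CORS}) stating that whenever the $f_i$ lie in the left class of an orthogonal factorization system, so do all the units $\eta_X$; this immediately places $\eta_X$ in $\prebot{(\{f_i\}^\bot)}$. You instead give a self-contained argument: factor $X\to\unit$ in the factorization system, identify the intermediate object as $L$-local and the left factor as an $L$-equivalence, and conclude that this left factor agrees with $\eta_X$ up to equivalence. Your argument is essentially an in-line proof of (the special case needed from) the cited theorem, and you even note the citation-based alternative in your final sentence. The trade-off is the usual one: the paper's version is shorter but imports more, while yours is longer but keeps the reasoning visible and avoids an external dependency.
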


\begin{proof}
  By~\cite[Lemma~2.9]{CORS}, every unit $\eta_X : X \to LX$ is an $L$-equivalence,
  from which the inclusion $j_3 \circ j_2$ follows.

  Now assume that $L$ is presented by the family $\{f_{i}\}$.
  Then~\cite[Lemma~2.9]{CORS} shows that each $f_i$ is an $L$-equivalence,
  from which the inclusion $j_2$ follows.

  Next, we prove the inclusion $j_3$.
  By~\cite[Theorem~2.41]{RSS}, the maps $\{f_i\}$ generate an orthogonal factorization system
  with left class $\prebot{(\{f_i\}^\bot)}$.
  By~\cite[Theorem~3.12]{CORS}, if the maps $\{f_i\}$ are in the left class of any
  orthogonal factorization system, then so are the unit maps $\eta_X$.
  Therefore, the units $\eta_X$ are in $\prebot{(\{f_i\}^\bot)}$,
  which is equivalent to saying that the inclusion $j_3$ exists.
\end{proof}

By the following proposition we obtain the inclusion $j_4$. 

\begin{prop}\label{prop:right-orthogonal-is-local}
  Let $L$ be a reflective subuniverse.
  A map $g:X\to Y$ that is right orthogonal to every unit $\eta:A\to LA$ is $L$-local.
\end{prop}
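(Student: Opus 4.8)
The plan is to reduce the claim to a statement about fibres. Recall that $g : X \to Y$ being $L$-local means each fibre $\fib_g(y)$ is $L$-local, so by the universal property of localization it suffices to show that for each $y : Y$, the unit map $\eta_{\fib_g(y)} : \fib_g(y) \to L(\fib_g(y))$ is an equivalence. Since $\eta$ is always an $L$-equivalence and its codomain is $L$-local, it is enough to produce a section-retraction situation, or more directly to show that $\fib_g(y)$ is $L$-local; the cleanest route is to show that $\eta_{\fib_g(y)}$ has a retraction, or better, to exhibit $\fib_g(y)$ as a retract of an $L$-local type, or to directly verify the lifting property that makes $\fib_g(y)$ local. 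I would use the standard fact (from \cite{hottbook}) that a type $Z$ is $L$-local if and only if $\eta_Z$ is an equivalence, and that a retract of an $L$-local type is $L$-local.

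The key step is to feed the orthogonality hypothesis the right square. Fix $y : Y$ and set $F \defeq \fib_g(y)$, with the canonical map $p : F \to X$ (the first projection, so that $g \circ p$ is constantly $y$, up to the canonical homotopy). Consider the $L$-naturality square of $p$, or rather, consider the square whose top map is $p : F \to X$, whose left map is $\eta_F : F \to LF$, whose bottom map is the ``constant at $y$'' map $LF \to Y$ induced by $\eta_F$ together with the homotopy $g \circ p \sim \const_y$, and whose right map is $g$. Concretely, since $LF$ is $L$-local and $Y$ need not be, I instead build the square
\begin{equation*}
  \begin{tikzcd}
    F \arrow[r,"p"] \arrow[d,swap,"\eta_F"] & X \arrow[d,"g"] \\
    LF \arrow[r] & Y
  \end{tikzcd}
\end{equation*}
where the bottom map is the extension of $\const_y \circ \,(\text{something})$ along $\eta_F$; here one uses that $\const_y : F \to Y$ factors through $\eta_F$ only if $Y$ is $L$-local, which it need not be — so instead the bottom-left composite should literally be $\const_y : F \to Y$, and the bottom map $LF \to Y$ is obtained by noting that $\const_y$ is a constant map and hence factors through $\brck F \to Y$ or, more simply, through the localization because constant maps factor through any type. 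The honest move: $\const_y : F \to Y$ is homotopic to the composite $F \to \unit \to Y$, and $F \to \unit$ factors as $F \to LF \to \unit$ trivially, giving the desired filler $LF \to Y$. Then $g \iperp \eta_F$ applied to this square yields a contractible type of diagonal fillers; in particular there is a map $r : LF \to X$ with $r \circ \eta_F \sim p$ and $g \circ r \sim (\text{bottom map})$, which exhibits a map $LF \to F$ over $Y$ after taking fibres, and one checks this is a retraction of $\eta_F$ regarded as a map of fibres. Hence $F$ is a retract of $L$-applied-to-$F$ restricted appropriately, so $F$ is $L$-local.

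I expect the main obstacle to be the bookkeeping that turns the lift $r : LF \to X$ into an actual retraction $L(\fib_g(y)) \to \fib_g(y)$ of $\eta$ — one must track the two homotopies produced by the filler and check they assemble into a map into the fibre $\fib_g(y)$, not merely into $X$, and that composing with $\eta_F$ recovers the identity up to the coherences. This is the kind of argument where the ``obvious'' diagram chase hides a coherence obligation between $g \circ r \sim \const_y$ and the defining path of the fibre. A slicker alternative that sidesteps some of this: show directly that for every $L$-local $A$ and every $a, b : A$, precomposition with $\eta$ forces the relevant path spaces to agree; but I think the fibrewise retract argument above, once the constant-map factorization is set up correctly, is the most direct and is how I would write it.
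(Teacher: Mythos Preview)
Your proposal is correct and follows the same underlying strategy as the paper: fix $y:Y$, set $F \defeq \fib_g(y)$, and exhibit $F$ as a retract of $LF$ by invoking the orthogonality hypothesis against $\eta_F$. The difference is purely in execution. You build a specific commuting square with $g$ on the right and then worry (rightly) about the coherences needed to promote the filler $r:LF\to X$ into an honest retraction $LF\to\fib_g(y)$ of $\eta_F$. The paper sidesteps exactly this bookkeeping with one extra observation: since $F\to\unit$ is a base change of $g$ along $y:\unit\to Y$, and right-orthogonality is stable under pullback, the map $F\to\unit$ is itself right orthogonal to every unit. That condition unwinds to say that $\eta_A^\ast:(LA\to F)\to(A\to F)$ is an equivalence for every $A$; taking $A\defeq F$ hands you a retraction $LF\to F$ of $\eta_F$ on the nose, with no $2$-cell to chase. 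So your argument and the paper's are the same modulo this one-line reduction---but the reduction is worth internalizing, since it is precisely what eliminates the ``coherence obligation'' you flagged as the main obstacle.
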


\begin{proof}
  Consider a map $g:X\to Y$ that is right orthogonal to every unit $\eta:A\to LA$, let $y:Y$, and write $F\defeq \fib_g(y)$. Our goal is to show that $F$ is $L$-local. Since the map $F\to \unit$ is a base change of the map $g:X\to Y$, it follows that the map $F\to\unit$ is right orthogonal to every unit $\eta:A\to LA$. In other words, the precomposition map
  \begin{equation*}
    \eta^\ast : (LA \to F) \lra (A \to F)
  \end{equation*}
  is an equivalence, for every type $A$. If we take $A$ to be $F$, then it easily follows that $F$ is a retract of $LF$, and hence that $F$ is $L$-local.
\end{proof}

As a corollary of having constructed the inclusions $i_2$, $i_3$, and $i_4$, we obtain the well-known fact that any $L$-connected map is an $L$-equivalence \cite[Lemma 1.35]{RSS}. The fact that every $L'$-equivalence is an $L$-connected map is shown in~\cite[Proposition 2.30]{CORS}, so we obtain the inclusions $i_7$ and $j_7\jdeq i_7^\bot$, and it follows that we obtain the tower of inclusions
\begin{equation*}
  \cdots\subseteq\{\text{$L'$-equivalences}\}\subseteq\{\text{$L$-connected maps}\}\subseteq\{\text{$L$-equivalences}\}.
\end{equation*}
We note that, in the case where $L$ is $n$-truncation for $n\geq-1$, these inclusions are all strict.

To conclude this section, we note that the inclusion $i_6$ is obtained from the inclusion $i_7$ by the fact that $i_5$ and $i_8$ are equalities.

\section{Characterizing when \texorpdfstring{$L'$}{L'} is a modality}
\label{se:L'-modality}

In this section, we give a variety of characterizations of when $L'$ is a modality,
which in particular proves \cref{thm:main}\ref{it:L'-modality}.

Before we give the characterization, we recall that an \define{$L$-cartesian square} is a commuting square
\begin{equation*}
  \begin{tikzcd}
    E' \arrow[d,swap,"{p'}"] \arrow[r,"g"] & E \arrow[d,"p"] \\
    B' \arrow[r,swap,"f"] & B
  \end{tikzcd}
\end{equation*}
for which the \define{gap map} $E'\to B'\times_B E$ is $L$-connected.

\begin{thm}\label{thm:L'-is-a-modality}
  Let $L$ be a reflective subuniverse. Then the following are equivalent:
  \begin{enumerate}
  \item \label{item:L'-is-modality}The reflective subuniverse $L'$ is a modality.
  \item \label{item:is-L'-etale-is-right-orth-L'-equiv}Every map that is right orthogonal to every $L'$-equivalence is $L'$-\'etale.
    That is, $j_8$ is an equality.
  \item \label{item:is-L'-etale-is-L-local}Every $L$-local map is $L'$-\'etale.  That is, the inclusion $j_6$ exists.
  \item \label{item:stable-L'-equivalences}The $L'$-equivalences are stable under base change by $L$-local maps, i.e., for any pullback square
    \begin{equation*}
      \begin{tikzcd}
        E' \arrow[d,swap,"{p'}"] \arrow[r,"g"] & E \arrow[d,"p"] \\
        B' \arrow[r,swap,"f"] & B
      \end{tikzcd}
    \end{equation*}
    in which $f:B'\to B$ is an $L'$-equivalence and $p:E\to B$ is an $L$-local map, the map $g:E'\to E$ is an $L'$-equivalence.
  \item \label{item:L'-localization-Sigma}For every type $A$, and every family $P$ of $L$-local types over $L'A$, the natural map
    \begin{equation*}
      \Big(\sm{x:A}P(\eta'(x))\Big) \llra{\alpha} \Big(\sm{x':L'A}P(x')\Big)
    \end{equation*}
    is an $L'$-localization.
  \item \label{item:eta-path-is-connected}For every type $A$ and every $x,y:A$, the map
    \begin{equation*}
      \eta:(x=y) \lra L(x=y)
    \end{equation*}
    is $L$-connected.
  \item \label{item:L'-preserves-L-cartesian}The operation $L'$ preserves $L$-cartesian squares.
  \end{enumerate}
  In particular, if $L$ is a modality, then all of these statements hold.
\end{thm}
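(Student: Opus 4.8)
The plan is to treat (1) as a hub and prove the equivalences $(1)\Leftrightarrow(2)$, $(1)\Leftrightarrow(4)\Leftrightarrow(5)\Leftrightarrow(6)\Leftrightarrow(7)$ and $(2)\Leftrightarrow(3)$. Two structural facts about $L'$ carry most of the weight: every unit $\eta'_X$ is surjective \cite{CORS}, and for any $A$ and $x,y:A$ the map $\ap_{\eta'_A}:(x=y)\to(\eta'_A x=\eta'_A y)$ is an $L$-localization \cite{CORS}, so that $(\eta'_A x=\eta'_A y)$ is a copy of $L(x=y)$ and $\ap_{\eta'_A}$ is a copy of the unit $\eta:(x=y)\to L(x=y)$. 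I will also use, via \cite{RSS} applied to $L'$, that $L'$ is a modality iff every $\eta'_X$ is $L'$-connected. The final clause of the statement is then immediate: if $L$ is a modality then so is $L'$ \cite{CORS}, whence (1), whence all of (1)--(7).

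The crux is the equivalence of (1) with the fibrewise conditions (4)--(7), and the heart of that is $(1)\Leftrightarrow(6)$. I would first record: a pointed type $F$ is $L'$-connected iff all of its identity types are $L$-connected. Indeed $L'F$ is inhabited when $F$ is, and by surjectivity of $\eta'_F$ one sees $L'F$ is a proposition iff each $(\eta'_F a=\eta'_F b)\simeq L(a=b)$ is contractible, i.e.\ iff each identity type of $F$ is $L$-connected. Next, for any $A$ and $a_0:A$ one has $\fib_{\eta'_A}(\eta'_A a_0)\simeq\sum_{x:A}(\eta'_A x=\eta'_A a_0)\simeq\sum_{x:A}L(x=a_0)$, and by the standard identification of the fibres of $\ap$ with based identity types of a fibre, the fibres of $\ap_{\eta'_A}$ — hence those of $\eta:(x=a_0)\to L(x=a_0)$ — are exactly the identity types of this fibre. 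Combining these with the lemma and surjectivity: for every $A$, $\eta'_A$ is $L'$-connected iff every identity type of every fibre of $\eta'_A$ is $L$-connected iff $\eta:(x=y)\to L(x=y)$ is $L$-connected for all $A,x,y$; quantifying over $A$ gives $(1)\Leftrightarrow(6)$.

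For (4), (5) and (7) the key observation is that for a family $P$ of $L$-local types over $L'A$ the projection $\sum_{z:L'A}P(z)\to L'A$ is an $L$-local map, and in particular the inclusion of the point $\eta'_A a_0$ into $L'A$ is $L$-local (its fibres are identity types of the $L$-separated type $L'A$). Hence the map $\alpha$ of (5), and the map $\fib_{\eta'_A}(\eta'_A a_0)\to 1$, are base changes of the $L'$-equivalence $\eta'_A$ along $L$-local maps, so under (4) the latter is an $L'$-equivalence, i.e.\ the fibres of $\eta'_A$ are $L'$-connected (giving $(4)\Rightarrow(1)$), and under (5) the map $\alpha$ is an $L'$-localization, so $\sum_z P(z)$ is $L'$-local, and taking $P(z)\defeq(z=\eta'_A a_0)$ (so $\sum_z P(z)$ is contractible) again forces the fibres of $\eta'_A$ to be $L'$-connected (giving $(5)\Rightarrow(1)$). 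Conversely, if $L'$ is a modality then each $\eta'_X$ is $L'$-connected, hence stable under base change; the standard pullback manipulation identifying a base change of an $L'$-equivalence along an $L'$-\'etale map with a base change of a unit $\eta'_{B'}$ then shows such base changes are $L'$-equivalences, and since $L$-local maps are then $L'$-\'etale (via $(1)\Rightarrow(2)\Rightarrow(3)$ below) this yields (4); (5) follows using in addition that $\Sigma$-closure makes $\sum_z P(z)$ $L'$-local, and (7) is a reformulation of (4)/(5) in terms of gap maps, treated the same way.

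Finally, to link (2) and (3): for $(1)\Rightarrow(2)$, a modality's reflective factorization system is an orthogonal factorization system \cite{CR}, so applied to $L'$ its right class, the $L'$-\'etale maps, equals $\prebot{(\{L'\text{-\'etale maps}\})}^{\bot}=\{L'\text{-equivalences}\}^{\bot}$ (using \cref{prop:L-equivalence} for the left class), i.e.\ $j_8$ is an equality; for $(2)\Rightarrow(1)$, apply \cref{prop:j_1} to $L'$, whose units are surjective; for $(2)\Rightarrow(3)$, every $L'$-equivalence is $L$-connected \cite{CORS}, so every $L$-local map is right orthogonal to every $L'$-equivalence by \cref{prop:L-connected}, which under (2) means it is $L'$-\'etale; and for $(3)\Rightarrow(1)$ one exploits that, by (3), $Z\to 1$ is $L'$-\'etale for every $L$-local $Z$ (so $L$-local types are $L$-separated) and $\sum_{z:L'A}P(z)$ is $L'$-local for every $L$-local family $P$, then feeds this into the fibre analysis above. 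I expect the main obstacle to be exactly the core equivalence $(1)\Leftrightarrow(6)$ — it rests on the explicit description of the fibres of $\eta'_A$ and of their identity types as copies of $L$ applied to identity types of $A$, plus the bookkeeping needed to compress "$L'$ is $\Sigma$-closed'' into the single statement that $\eta$ is $L$-connected on all identity types — with $(3)\Rightarrow(1)$ the next most delicate step, since (3) only provides information about $L$-\emph{local} maps and must be transported through the identity types of the fibres of $\eta'$.
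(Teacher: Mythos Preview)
Your overall organization differs from the paper's but is mostly sound: the paper runs a single cycle $(1)\Rightarrow(2)\Rightarrow(3)\Rightarrow(4)\Rightarrow(5)\Rightarrow(1)$ and then handles $(1)\Leftrightarrow(6)\Leftrightarrow(7)$ separately, whereas you use $(1)$ as a hub. Your $(1)\Leftrightarrow(6)$ argument is essentially the content of the paper's (which cites \cite[Remark~2.35 and Proposition~2.26]{CORS} rather than unpacking the fibre description), and your use of \cref{prop:j_1} applied to $L'$ for $(2)\Rightarrow(1)$ is a legitimate shortcut the paper does not take.

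There is, however, a genuine gap in your treatment of condition~(7). You write that ``(7) is a reformulation of (4)/(5) in terms of gap maps, treated the same way,'' but this is not correct. Condition~(4) concerns pullback squares in which one leg is $L$-local and the other is an $L'$-equivalence; condition~(7) concerns \emph{arbitrary} commuting squares whose gap map is $L$-connected, with no hypothesis on the individual maps. The paper's proof of $(6)\Rightarrow(7)$ is not a variation on the cube argument: one must show that the map
\[
  A\times_X B \longrightarrow L'A\times_{L'X}L'B
\]
is $L$-connected, which is done by writing both sides as iterated $\Sigma$-types and invoking \cite[Lemma~1.39]{RSS} together with the $L$-connectedness of each $\eta'$ and of each $\ap_{\eta'}$ (the latter coming precisely from~(6)). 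The converse $(7)\Rightarrow(6)$ applies $L'$ to the pullback square exhibiting $(x=y)$ as a fibre and reads off that $L'(x=y)\to L(x=y)$ is $L$-connected. None of this is captured by your sketch.

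A smaller issue: your $(3)\Rightarrow(1)$ is underspecified. The observations you list---that $L$-local types are $L$-separated, and that $\sum_{z:L'A}P(z)$ is $L'$-local for $L$-local $P$---hold for any reflective subuniverse (the latter is \cite[Lemma~2.21]{CORS}) and do not use~(3). What (3) actually buys you is that \emph{both} vertical projections in the pullback square defining $\alpha$ are $L'$-\'etale; one then needs the cube argument (exactly as in the paper's $(3)\Rightarrow(4)$, using surjectivity of $\eta'$) to conclude that $\alpha$ is an $L'$-equivalence. Your phrase ``feeds this into the fibre analysis above'' does not make this step visible, and your earlier ``standard pullback manipulation'' for $(1)\Rightarrow(4)$ is gesturing at the same cube argument without stating it.
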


While most of the conditions involve $L$, conditions~\ref{item:L'-is-modality}
and~\ref{item:is-L'-etale-is-right-orth-L'-equiv} involve only $L'$.
This motivates \cref{q:j_1} in the Introduction and \cref{prop:j_1}.

\begin{proof}
  We will first prove the implications \ref{item:L'-is-modality} $\Longrightarrow$ \ref{item:is-L'-etale-is-right-orth-L'-equiv} $\Longrightarrow$ \ref{item:is-L'-etale-is-L-local} $\Longrightarrow$ \ref{item:stable-L'-equivalences}
  $\Longrightarrow$ \ref{item:L'-localization-Sigma} $\Longrightarrow$ \ref{item:L'-is-modality}. Afterwards, we will show that \ref{item:L'-is-modality} $\Longleftrightarrow$ \ref{item:eta-path-is-connected} and that \ref{item:eta-path-is-connected} $\Longleftrightarrow$ \ref{item:L'-preserves-L-cartesian}.

  To start with, we show that \ref{item:L'-is-modality} implies \ref{item:is-L'-etale-is-right-orth-L'-equiv}.
  If $L'$ is a modality, then by~\cite[Theorem~7.2]{CR}, summarized in the introduction, the pair
  \begin{equation*}
    (L'\text{-equivalences},L'\text{-\'etale maps})
  \end{equation*}
  is an orthogonal factorization system, so in particular~\ref{item:is-L'-etale-is-right-orth-L'-equiv} holds.

  The fact that \ref{item:is-L'-etale-is-right-orth-L'-equiv} implies \ref{item:is-L'-etale-is-L-local}
  is clear from the diagram in \cref{thm:main}, since if $j_8$ is an equality,
  then the inclusion $j_6$ is the composite $j_7\circ j_5$.

    Now we show that \ref{item:is-L'-etale-is-L-local} implies \ref{item:stable-L'-equivalences}.  Consider a pullback square
  \begin{equation*}
    \begin{tikzcd}
      E' \arrow[d,swap,"{p'}"] \arrow[r,"g"] & E \arrow[d,"p"] \\
      B' \arrow[r,swap,"f"] & B
    \end{tikzcd}
  \end{equation*}
  in which $p$ (and hence $p'$) are $L$-local, and $f$ is an $L'$-equivalence. This square fits in a commuting cube of the form
  \begin{equation*}
    \begin{tikzcd}
      & E' \arrow[dl] \arrow[d] \arrow[dr] \\
      L'E' \arrow[d] & B' \arrow[dl] \arrow[dr] & E \arrow[d] \arrow[dl,crossing over] \\
      L'B' \arrow[dr] & L'E \arrow[from=ul,crossing over] \arrow[d] & B \arrow[dl] \\
      & L'B . & \phantom{L'B'}
    \end{tikzcd}
  \end{equation*}
  Since the maps $p$ and $p'$ are assumed to be $L$-local, it follows by assumption that they are $L'$-\'etale. Therefore the back-left and the front-right squares are pullback squares. The back-right square is a pullback square by assumption. Since the unit map $\eta':B'\to L'B'$ is surjective, it therefore follows that the front-left square is also a pullback. However, the bottom map in this square is an equivalence, so it follows that the top map in this square is an equivalence. In other words: the map $g':E'\to E$ is an $L'$-equivalence.

  To see that \ref{item:stable-L'-equivalences} implies \ref{item:L'-localization-Sigma}, let $P$ be a family of $L$-local types over $L'A$ and consider
  the pullback square
  \begin{equation*}
    \begin{tikzcd}
      \dsm{x:A}P(\eta'(x)) \arrow[d,swap,"\pr_1"] \arrow[r,"\alpha"] & \dsm{x':L'A}P(x') \arrow[d] \\
      A \arrow[r,swap,"{\eta'}"] & L'A.
    \end{tikzcd}
  \end{equation*}
  By~\cite[Lemma~2.21]{CORS}, the codomain of $\alpha$ is $L'$-local, so it
  suffices to show that it is an $L'$-equivalence.
  In this square, the two vertical maps are $L$-local, while the map $\eta':A\to L'A$ is an $L'$-equivalence, so the claim follows from~\ref{item:stable-L'-equivalences}.
  
  To see that \ref{item:L'-localization-Sigma} implies \ref{item:L'-is-modality}, note that a special case of \ref{item:L'-localization-Sigma} is that the map
  \begin{equation*}
    \Big(\sm{x:A}\eta'(x)=a'\Big) \lra \Big(\sm{x':L'A}x'=a'\Big)
  \end{equation*}
  is an $L'$-localization, for any $a':L'A$.
  The codomain is contractible and the domain is the fibre of $\eta' : A \to L'A$,
  so $\eta'$ is $L'$-connected.  Thus, $L'$ is a modality.

  Now we show that \ref{item:L'-is-modality} and \ref{item:eta-path-is-connected} are equivalent. Note that $L'$ is a modality if and only if every unit map $\eta':A\to L'A$ is $L'$-connected.
  By~\cite[Remark 2.35]{CORS}, this happens if and only if
  \begin{equation*}
    \ap_{\eta'}:(x=y) \lra (\eta'(x)=\eta'(y))
  \end{equation*}
  is $L$-connected, for every $x, y : A$. Since $\ap_{\eta'}$ is an $L$-localization by \cite[Proposition~2.26]{CORS}, we see that the last condition holds if and only if $\eta:(x=y)\to L(x=y)$ is $L$-connected. This proves that \ref{item:L'-is-modality} is equivalent to \ref{item:eta-path-is-connected}.

  Finally, we show that \ref{item:eta-path-is-connected} and \ref{item:L'-preserves-L-cartesian} are equivalent. To show that \ref{item:eta-path-is-connected} implies \ref{item:L'-preserves-L-cartesian}, consider an $L$-cartesian square and its $L'$-localization
  \begin{equation*}
    \begin{tikzcd}
      C \arrow[r,"q"] \arrow[d,swap,"p"] & B \arrow[d,"g"] &[4em] L'C \arrow[r,"{L'q}"] \arrow[d,swap,"{L'p}"] & L'B \arrow[d,"{L'g}"] \\
      A \arrow[r,swap,"f"] & X & L'A \arrow[r,swap,"{L'f}"] & L'X.
    \end{tikzcd}
  \end{equation*}
  The gap map $L'C\to L'A\times_{L'X}L'B$ fits in a commuting square
  \begin{equation*}
    \begin{tikzcd}
      C \arrow[r] \arrow[d,swap,"{\eta'}"] & A\times_X B \arrow[d] \\
      L'C \arrow[r] & L'A\times_{L'X}\times L'B .
    \end{tikzcd}
  \end{equation*}
  In this square, the top map is $L$-connected by hypothesis and the map on the
  left is $L$-connected because $L'$ units are always $L$-connected.
  It follows from~\cite[Lemma~1.33]{RSS} that the bottom map is $L$-connected if and only if the vertical map on the right is $L$-connected. We proceed by showing that the map on the right is $L$-connected. This map is the map on total spaces
  \begin{equation*}
    \Big(\sm{x:A}{y:B}f(x)=g(y)\Big) \lra \Big(\sm{x':L'A}{y':L'B}(L'f)(x')=(L'g)(y')\Big)
  \end{equation*}
  induced by the $L$-connected maps $\eta':A\to L'A$ and $\eta':B\to L'B$, and the composite
  \begin{equation*}
    \begin{tikzcd}
      \big(f(x)=g(y)\big) \arrow[r,"\ap_{\eta'}"] & \big(\eta'(f(x))=\eta'(g(y))\big) \arrow[r,"\simeq"] & \big((L'f)(\eta'(x))=(L'g)(\eta'(y))\big)
    \end{tikzcd}
  \end{equation*}
  of $\ap_{\eta'}$ with an equivalence.
  By~\cite[Proposition~2.26]{CORS}, $\ap_{\eta'}$ is an $L$-localization, so
  this composite is also $L$-connected, as we have assumed \ref{item:eta-path-is-connected}.
  It follows from~\cite[Lemma~1.39]{RSS} that the map on total spaces is $L$-connected.
  This proves that \ref{item:eta-path-is-connected} implies \ref{item:L'-preserves-L-cartesian}.

  To see that \ref{item:L'-preserves-L-cartesian} implies \ref{item:eta-path-is-connected}, note that any pullback square is $L$-cartesian. Therefore it follows that the square
  \begin{equation*}
    \begin{tikzcd}
      L'(x=y) \arrow[r] \arrow[d] & \unit \arrow[d,"{\eta'(y)}"] \\
      \unit \arrow[r,swap,"{\eta'(x)}"] & L'A
    \end{tikzcd}
  \end{equation*}
  is $L$-cartesian. Equivalently, the map $L'(x=y)\to L(x=y)$ is $L$-connected. Since we have a commuting triangle
  \begin{equation*}
    \begin{tikzcd}[column sep=0]
      & (x=y) \arrow[dl] \arrow[dr] \\
      L'(x=y) \arrow[rr] & & L(x=y),
    \end{tikzcd}
  \end{equation*}
  we see that $\eta:(x=y)\to L(x=y)$ is a composite of $L$-connected maps. Therefore it is $L$-connected, which proves that \ref{item:L'-preserves-L-cartesian} implies \ref{item:eta-path-is-connected}.
\end{proof}

\begin{rmk}
  Many of the properties in \cref{thm:L'-is-a-modality} are in fact only slightly
  stronger than properties that hold for an arbitrary reflective subuniverse.
  For example, property \ref{item:L'-localization-Sigma} says that for every
  family $P$ of $L$-local types over a type the form $L'A$, the natural map
  \begin{equation*}
    \Big(\sm{x:A}P(\eta'(x))\Big) \llra{\alpha} \Big(\sm{x':L'A}P(x')\Big)
  \end{equation*}
  is an $L'$-equivalence.
  In fact, for every reflective subuniverse $L$, this map is $L$-connected.
  To show this, it suffices to verify condition \ref{Lconn2} of \cref{prop:L-connected},
  namely that the precomposition map
  \begin{equation*}
    \Big(\prd{z:\sm{x':L'A}P(x')}Q(z)\Big) \lra \Big(\prd{z:\sm{x:A}P(\eta'(x))}Q(\alpha(z)\Big)
  \end{equation*}
  is an equivalence for every family $Q$ of $L$-local types over the total space of $P$.
  This last equivalence follows from~\cite[Proposition 2.22]{CORS}.
\end{rmk}

Finally, we also note that if $L'$ is a modality, then the converse of property \ref{item:stable-L'-equivalences} also holds. We will use the following lemma to prove this fact.

\begin{lem}\label{lem:units-surj}
  Suppose that $L$ is a reflective subuniverse for which all unit maps $\eta:X\to LX$ are surjective, and consider a commuting square
  \begin{equation*}
    \begin{tikzcd}
      E' \arrow[d,swap,"{p'}"] \arrow[r,"g"] & E \arrow[d,"p"] \\
      B' \arrow[r,swap,"f"] & B
    \end{tikzcd}
  \end{equation*}
  in which $p$ and $p'$ are $L$-\'etale and $f$ is an $L$-equivalence. Then the following are equivalent:
  \begin{enumerate}
  \item The square is a pullback square.
  \item The map $g$ is an $L$-equivalence.
  \end{enumerate}
\end{lem}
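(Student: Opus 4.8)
The plan is to compare the given square with its image under $L$ by means of a commuting cube, and to read off both implications from the pasting lemma for pullback squares.

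First I would form the commuting cube whose top face is the given square, whose bottom face is the square
\begin{equation*}
  \begin{tikzcd}
    LE' \arrow[r,"Lg"] \arrow[d,swap,"Lp'"] & LE \arrow[d,"Lp"] \\
    LB' \arrow[r,swap,"Lf"] & LB
  \end{tikzcd}
\end{equation*}
obtained by applying $L$, whose four vertical edges are the unit maps $\eta$, and whose four vertical faces are the $L$-naturality squares of $g$, $p'$, $p$ and $f$; all faces commute, by naturality of $\eta$ and functoriality of $L$. Since $p$ and $p'$ are $L$-\'etale, the two vertical faces lying over the opposite edges $p$ and $p'$ of the top face --- namely the $L$-naturality squares of $p$ and $p'$ --- are pullbacks.

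The key step is the cube lemma. Pasting the top face onto the $L$-naturality square of $p$ along the edge $E\to B$ produces a rectangle with corners $E'$, $LE$, $B'$, $LB$; pasting the $L$-naturality square of $p'$ onto the bottom face along the edge $B'\to LB'$ produces the same rectangle, since the two outer composites agree up to the homotopies $\eta\circ g\sim Lg\circ\eta$ and $\eta\circ f\sim Lf\circ\eta$, which do not affect whether a square is a pullback. Applying the pasting lemma to the first decomposition, using that the $L$-naturality square of $p$ is a pullback, this rectangle is a pullback if and only if the top face is; applying it to the second, using that the $L$-naturality square of $p'$ is a pullback, the rectangle is a pullback if and only if the bottom face is. Hence the given square is a pullback if and only if its image under $L$ is. (In the direction where the square is assumed to be a pullback, one can alternatively deduce that its $L$-image is a pullback from the surjectivity of the unit $\eta:B'\to LB'$, by the descent argument used in the proof of \cref{thm:L'-is-a-modality}; this is presumably why the surjectivity hypothesis is stated.)

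It remains to recognise when the bottom face is a pullback. Because $f$ is an $L$-equivalence, $Lf$ is an equivalence; and a commuting square one of whose horizontal edges is an equivalence is a pullback exactly when the parallel horizontal edge is an equivalence --- if both are equivalences the square is a pullback, while conversely the pullback of an equivalence is an equivalence. Applied to the bottom face this says it is a pullback if and only if $Lg$ is an equivalence, i.e.\ if and only if $g$ is an $L$-equivalence. Combining the two equivalences proves the lemma. The only delicate point I anticipate is the bookkeeping in the cube lemma --- checking that the two pastings really yield the same rectangle, so that the pasting lemma may legitimately be invoked twice --- which is routine, and is exactly the step at which the surjectivity hypothesis permits the more hands-on descent argument of \cref{thm:L'-is-a-modality} as a substitute.
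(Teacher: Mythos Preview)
Your overall strategy---building the cube whose lateral faces are the $L$-naturality squares of $p$, $p'$, $f$, $g$ and reducing the question to whether the bottom face is a pullback---is exactly the paper's approach. The gap is in your second application of the pasting lemma. That lemma is asymmetric: given a rectangle decomposed into a left and a right square, it asserts that \emph{if the right square is a pullback}, then the outer rectangle is a pullback iff the left square is. It does \emph{not} assert the symmetric statement with ``left'' and ``right'' interchanged; from ``left square is a pullback'' one obtains only the forward implication ``right pullback $\Rightarrow$ outer pullback''. In your second decomposition the naturality square of $p'$ is the \emph{left} square and the bottom face the right one, so you legitimately get ``bottom pullback $\Rightarrow$ rectangle pullback'' but not its converse. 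Consequently your argument proves (ii) $\Rightarrow$ (i) but leaves (i) $\Rightarrow$ (ii) unproved.

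The missing converse is exactly where surjectivity of $\eta:B'\to LB'$ enters, and it is essential rather than an ``alternative''. With both the left square and the outer rectangle pullbacks, the induced comparison map for the right square is an equivalence on fibres over every point of $LB'$ hit by $\eta$; surjectivity of $\eta$ (together with the fact that being an equivalence is a mere proposition) then upgrades this to all points of $LB'$, forcing the right square to be a pullback. This is precisely how the paper closes the argument, and why the hypothesis is stated. Your parenthetical remark identifies the right tool but misdiagnoses it as optional bookkeeping; without it the proof of (i) $\Rightarrow$ (ii) does not go through.
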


\begin{proof}
    The commuting square in the claim fits in a commuting cube of the form
  \begin{equation*}
    \begin{tikzcd}
      & E' \arrow[dl] \arrow[d] \arrow[dr] \\
      LE' \arrow[d] & B' \arrow[dl] \arrow[dr] & E \arrow[d] \arrow[dl,crossing over] \\
      LB' \arrow[dr] & LE \arrow[from=ul,crossing over] \arrow[d] & B \arrow[dl] \\
      & LB . & \phantom{LB'}
    \end{tikzcd}
  \end{equation*}
  Since $p$ and $p'$ are assumed to be $L$-\'etale, it follows that the back-left and front-right squares are pullback squares. Since $f$ is assumed to be an $L$-equivalence,
the front-left square is a pullback square if and only if $g$ is an $L$-equivalence.
Thus it suffices to show that the front-left square is a pullback square if and only if the back-right square is a pullback square. We indeed have this equivalence, since the unit map $B'\to LB'$ is surjective.
\end{proof}

\begin{prop}
  Suppose $L$ is a reflective subuniverse such that $L'$ is a modality. Then any commuting square
  \begin{equation*}
    \begin{tikzcd}
      E' \arrow[d,swap,"{p'}"] \arrow[r,"g"] & E \arrow[d,"p"] \\
      B' \arrow[r,swap,"f"] & B
    \end{tikzcd}
  \end{equation*}
  in which $p$ and $p'$ are $L$-local and $f$ and $g$ are $L'$-equivalences is a pullback square.
\end{prop}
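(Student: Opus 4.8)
The plan is to reduce the statement to \cref{lem:units-surj}, applied with the reflective subuniverse $L'$ in place of $L$. So the first task is to check that all the hypotheses of that lemma hold for $L'$ in the situation at hand. Recall that $L'$ is itself a reflective subuniverse~\cite{CORS}, and that its unit maps $\eta' : X \to L'X$ are always surjective by~\cite[Lemma~2.17]{CORS}; this is precisely the standing hypothesis on units in \cref{lem:units-surj}. Next, since $L'$ is assumed to be a modality, the equivalence of \ref{item:L'-is-modality} and \ref{item:is-L'-etale-is-L-local} in \cref{thm:L'-is-a-modality} tells us that every $L$-local map is $L'$-\'etale; in particular, the maps $p$ and $p'$ in the given square are $L'$-\'etale. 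Finally, $f$ is an $L'$-equivalence by hypothesis.

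With these observations in place, \cref{lem:units-surj}, read with $L$ replaced throughout by $L'$, applies directly to the given commuting square: $p$ and $p'$ are $L'$-\'etale, $f$ is an $L'$-equivalence, and $L'$ has surjective units. The conclusion of that lemma is that the square is a pullback square if and only if $g$ is an $L'$-equivalence. Since $g$ is assumed to be an $L'$-equivalence, we conclude that the square is a pullback square, as claimed.

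I do not expect any serious obstacle here: the proof is essentially an assembly of facts already established in the excerpt. The only point that deserves care is verifying that the phrase ``$L$ is a reflective subuniverse for which all unit maps are surjective'' in \cref{lem:units-surj} legitimately specializes to $L'$, and that $L$-local maps really are $L'$-\'etale in our setting — both of which are handled by \cite[Lemma~2.17]{CORS} and by \cref{thm:L'-is-a-modality}, respectively. (One could alternatively redo the cube argument of \cref{lem:units-surj} by hand, using that the back-left and front-right faces of the associated $L'$-localization cube are pullbacks because $p,p'$ are $L'$-\'etale, that the back-right face is a pullback since $f$ and $g$ are $L'$-equivalences, and that surjectivity of $\eta' : B' \to L'B'$ transfers the pullback property from the back-right face to the front-left face; but invoking the lemma is cleaner.)
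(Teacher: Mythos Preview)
Your proof is correct and follows essentially the same approach as the paper: apply \cref{lem:units-surj} with $L'$ in place of $L$, using that $L'$-units are surjective and that $L$-local maps are $L'$-\'etale when $L'$ is a modality. The paper's proof is more terse but identical in substance.
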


\begin{proof}
  We apply the previous lemma with the reflective subuniverse $L'$. Indeed, all of the unit maps $\eta'$ are surjective. Moreover, since $L'$ is assumed to be a modality, it follows that every $L$-local map is $L'$-\'etale.
\end{proof}

\section{Characterizing modalities}\label{se:L-modality}

In this section, we prove \cref{thm:main}\ref{it:L-modality} along with further conditions that are equivalent to $L$ being a modality.

\begin{thm}\label{thm:L-modality}
  Let $L$ be a reflective subuniverse.
  Then the following are equivalent:
  \begin{enumerate}
  \item\label{Lmod1} $L$ is a modality.
  \item\label{Lmod2} The inclusion $i_4 : \prebot\{ L\text{-local maps}\} \hookrightarrow \prebot(\{\eta_X\}^{\bot})$ is an equality.
  \item\label{Lmod3} The inclusion $j_4 : \{\eta_X\}^{\bot} \hookrightarrow \{L\text{-local maps}\}$ is an equality.
  \item\label{Lmod4} The inclusion $j_5 : \{L\text{-local maps}\} \hookrightarrow \{L\text{-connected maps}\}^{\bot}$ is an equality.
  \item\label{Lmod5} The inclusion $j_5 \circ j_4:  \{\eta_X\}^{\bot} \hookrightarrow \{L\text{-connected maps}\}^{\bot}$ is an equality.
  \item\label{Lmod6} $L$ preserves pullback squares
    \begin{equation*}
      \begin{tikzcd}
        E' \arrow[d] \arrow[r] & E \arrow[d,"p"] \\
        B' \arrow[r] & B
      \end{tikzcd}
    \end{equation*}
    in which the map $p$ is $L$-\'etale.
  \item\label{Lmod7}The $L$-equivalences are stable under base change by $L$-\'etale maps.
  \end{enumerate}
  Also, when $L$ is a modality,
  $j_1 : \{L\text{-\'etale maps}\} \hookrightarrow \{L\text{-equivalences}\}^{\bot}$
  is an equality and the conditions in \cref{thm:main}\ref{it:L'-modality} hold.
  If, in addition, $L$ is presented as a nullification at the family $\{f_i\}$,
  then $i_3 : \prebot(\{\eta_X\}^{\bot}) \hookrightarrow \prebot(\{f_i\}^{\bot})$
  and $j_3 : \{f_i\}^{\bot} \hookrightarrow \{\eta_X\}^{\bot}$ are equalities.
\end{thm}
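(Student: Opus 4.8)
The plan is to establish the cycle of equivalences \ref{Lmod1} $\Rightarrow$ \ref{Lmod3} $\Rightarrow$ \ref{Lmod4} $\Rightarrow$ \ref{Lmod5} $\Rightarrow$ \ref{Lmod6} $\Rightarrow$ \ref{Lmod7} $\Rightarrow$ \ref{Lmod1}, handle \ref{Lmod2} separately via the $\prebot{(-)}$ correspondence, and then prove the ``also'' clauses. For \ref{Lmod1} $\Rightarrow$ \ref{Lmod3}: if $L$ is a modality, then $(L\text{-connected},L\text{-local})$ is the stable orthogonal factorization system, so the $L$-local maps are exactly $\{L\text{-connected maps}\}^\bot$; combined with the already-established inclusions $\{L\text{-connected}\} \subseteq \{\eta_X\}$-stuff—more precisely, since unit maps are $L$-connected when $L$ is a modality, every map right orthogonal to the $\eta_X$ is right orthogonal to a subclass of the $L$-connected maps, but we need the reverse. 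The cleanest route: $j_5$ always holds (\cref{prop:L-connected} gives $\{L\text{-local}\} \subseteq \{L\text{-connected}\}^\bot$); when $L$ is a modality, $\{L\text{-connected}\}^\bot = \{L\text{-local}\}$ by the factorization system, so $j_5$ is an equality, giving \ref{Lmod4}; and since the units $\eta_X$ are $L$-connected, $\{L\text{-connected}\}^\bot \subseteq \{\eta_X\}^\bot$, which together with $j_4 : \{\eta_X\}^\bot \subseteq \{L\text{-local}\}$ and the equality $j_5 \circ j_4$ forces both $j_4$ and $j_5$ to be equalities—so \ref{Lmod1} implies \ref{Lmod3}, \ref{Lmod4}, \ref{Lmod5} all at once. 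Since a composite inclusion is an equality iff each factor is (noted after \cref{thm:main}), \ref{Lmod5} $\Leftrightarrow$ (\ref{Lmod3} and \ref{Lmod4}).

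For the converse direction, I would close the loop via the ``$L$ preserves certain pullbacks'' conditions. \ref{Lmod3} $\Rightarrow$ \ref{Lmod1}: assuming $\{\eta_X\}^\bot = \{L\text{-local maps}\}$, I want to show every unit $\eta_X$ is $L$-connected (equivalent to $L$ being a modality by \cite[Theorem~1.32]{RSS}). Here $\{\eta_X\}^\bot = \{L\text{-local}\}$ means, by \cref{prop:L-connected}, that a map is left orthogonal to every $L$-local map iff it is left orthogonal to every $\eta_X$—i.e. $\prebot\{L\text{-local}\} = \prebot(\{\eta_X\}^\bot)$, which is exactly \ref{Lmod2}; so \ref{Lmod3} $\Leftrightarrow$ \ref{Lmod2} is formal. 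To get $L$-connectedness of the units from \ref{Lmod3}: each $\eta_X$ is in $\prebot(\{\eta_X\}^\bot) = \prebot\{L\text{-local maps}\} = \{L\text{-connected maps}\}$ by the equality $i_5$ of \cref{thm:main}\ref{it:prebot} (\cref{prop:L-connected}). That gives it immediately, which I expect to be the slick core of the argument. For \ref{Lmod4} $\Rightarrow$ \ref{Lmod1} one argues similarly: $\eta_X \in \prebot\{L\text{-connected}\}^\bot$ is automatic, and $\{L\text{-connected}\}^\bot = \{L\text{-local}\}$ makes $\eta_X$ left orthogonal to every $L$-local map, hence $L$-connected by $i_5$.

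The conditions \ref{Lmod6} and \ref{Lmod7} I would tie in through the reflective factorization system (\cite[Theorem~7.2]{CR}): the $L$-étale maps are always $\{L\text{-equivalences}\}^\bot$ up to the inclusion $j_1$, and $L$ is a modality iff $j_1$ is an equality (this is part of what we must prove anyway). Concretely, \ref{Lmod6} says $L$ preserves pullbacks of $L$-étale maps; since an $L$-étale map $p$ has its naturality square a pullback, preservation of such pullbacks is equivalent to the $L$-naturality squares of base changes being pullbacks, i.e. base changes of $L$-étale maps along arbitrary maps remain... and dualizing, \ref{Lmod7} is the ``left'' formulation. I would prove \ref{Lmod1} $\Rightarrow$ \ref{Lmod6} $\Rightarrow$ \ref{Lmod7} $\Rightarrow$ \ref{Lmod1} by mimicking the cube arguments of \cref{lem:units-surj} and \cref{thm:L'-is-a-modality}\ref{item:stable-L'-equivalences}, but for $L$ itself, using that when $L$ is a modality every $L$-étale map is right orthogonal to every $L$-equivalence (equality $j_1$), and conversely that stability of $L$-equivalences under base change along $L$-étale maps applied to units $\eta_X \to L(\text{terminal})$ type squares forces the units to be $L$-connected. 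The main obstacle I anticipate is getting \ref{Lmod7} $\Rightarrow$ \ref{Lmod1} cleanly: one needs to choose the right test pullback square (base change of the $L$-étale map $\id_{LX} : LX \to LX$, or of $LX \to \unit$ which is $L$-étale since $LX$ is $L$-local) against an $L$-equivalence like $\eta_X$, so that the resulting $L$-equivalence is precisely a unit map that one can then show is $L$-connected. Finally, for the closing remarks: $j_1$ being an equality when $L$ is a modality follows because $L$-étale $=$ $\{L\text{-equivalences}\}^\bot$ is exactly the right class of the reflective factorization system, which is genuinely orthogonal for a modality; the conditions of \cref{thm:main}\ref{it:L'-modality} follow from the last sentence of \cref{thm:L'-is-a-modality}; and the nullification claim about $i_3, j_3$ should follow from \cref{prop:j2} together with the fact that for a nullification the generators $f_i$ have contractible codomains, so $\{f_i\}^\bot$ coincides with $\{\eta_X\}^\bot$ on the nose — I would verify this by identifying both with the class of $L$-local maps using \cref{prop:right-orthogonal-is-local} and the modality structure.
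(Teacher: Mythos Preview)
Your overall architecture is close to the paper's: you, like the paper, argue the equivalence of \ref{Lmod1}--\ref{Lmod5} first and then run a separate cycle \ref{Lmod1} $\Rightarrow$ \ref{Lmod6} $\Rightarrow$ \ref{Lmod7} $\Rightarrow$ \ref{Lmod1}. Your argument \ref{Lmod3} $\Rightarrow$ \ref{Lmod1} (each $\eta_X$ lies in $\prebot(\{\eta_X\}^\bot) = \prebot\{L\text{-local maps}\} = \{L\text{-connected maps}\}$) is correct and is in fact slicker than the paper's route, which instead goes \ref{Lmod3} $\Rightarrow$ \ref{Lmod2} $\Rightarrow$ \ref{Lmod5} $\Rightarrow$ \ref{Lmod4} $\Rightarrow$ \ref{Lmod1}.

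There is, however, a genuine gap in your implication \ref{Lmod4} $\Rightarrow$ \ref{Lmod1}. You write that ``$\eta_X \in \prebot(\{L\text{-connected}\}^\bot)$ is automatic''; but by \cref{thm:main}\ref{it:prebot} one has $\prebot(\{L\text{-connected}\}^\bot) = \{L\text{-connected}\}$, so this assertion is exactly the statement that $\eta_X$ is $L$-connected, which is what you are trying to prove. Nothing else in your outline recovers \ref{Lmod4} $\Rightarrow$ \ref{Lmod1}: the observation ``$(\text{\ref{Lmod5}}) \Leftrightarrow (\text{\ref{Lmod3}} \text{ and } \text{\ref{Lmod4}})$'' only lets you deduce \ref{Lmod4} from \ref{Lmod5}, not the reverse from \ref{Lmod4} alone. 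The paper's argument here is different and worth internalizing: if $j_5$ is an equality then $\{L\text{-local maps}\} = \{L\text{-connected maps}\}^\bot$, and any class of the form $\mathcal{M}^\bot$ is closed under composition; closure of $L$-local maps under composition is precisely $\Sigma$-closedness of the $L$-local types, hence $L$ is a modality.

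Two smaller points. First, you assert ``\ref{Lmod3} $\Leftrightarrow$ \ref{Lmod2} is formal'', but only \ref{Lmod3} $\Rightarrow$ \ref{Lmod2} follows by applying $\prebot(-)$; the paper gets the converse indirectly via \ref{Lmod2} $\Rightarrow$ \ref{Lmod5} (apply $(-)^\bot$) and then \ref{Lmod5} $\Rightarrow$ \ref{Lmod3}. Second, for \ref{Lmod7} $\Rightarrow$ \ref{Lmod1} your instinct to pick a test square is right, but the correct square is the pullback of the $L$-\'etale map $y : \unit \to LX$ along the $L$-equivalence $\eta : X \to LX$; the base-changed map is then $\fib_\eta(y) \to \unit$, whose being an $L$-equivalence says directly that $\fib_\eta(y)$ is $L$-connected. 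Your sketch for the nullification clause (identify both $\{f_i\}^\bot$ and $\{\eta_X\}^\bot$ with $\{L\text{-local maps}\}$) is essentially the content of \cref{prop:j34}, which the paper proves separately.
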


\begin{proof}
  We first show that \ref{Lmod1} $\Longrightarrow$ \ref{Lmod5} $\Longrightarrow$ \ref{Lmod4} $\Longrightarrow$ \ref{Lmod1}.
  When $L$ is a modality, $j_5 \circ j_4$ is an equality, since for a modality,
  the units $\eta_X$ are $L$-connected. %
  Having $j_5 \circ j_4$ an equality clearly implies that $j_5$ is an equality.
  We close the circle by showing that when $j_5$ is an equality, $L$ is a modality.
  This is because $\{L\text{-connected maps}\}^\bot$ is closed under
  composition, and $\{L\text{-local maps}\}$ being closed under composition
  implies that the $L$-local types are $\Sigma$-closed, which we are taking
  as our definition of a modality.

  Now we show that \ref{Lmod3} $\Longrightarrow$ \ref{Lmod2} $\Longrightarrow$ \ref{Lmod5} $\Longrightarrow$ \ref{Lmod3}. The first implication follows from applying $\prebot(-)$,
  the second follows from applying $(-)^{\bot}$,
  and the third is trivial.

  Next we show that \ref{Lmod1} $\Longrightarrow$ \ref{Lmod6} $\Longrightarrow$ \ref{Lmod7} $\Longrightarrow$ \ref{Lmod1}. The implication \ref{Lmod1} $\Longrightarrow$ \ref{Lmod6} is \cite[Corollary 5.2]{CR}. To show that \ref{Lmod6} $\Longrightarrow$ \ref{Lmod7}, consider a pullback square
    \begin{equation*}
      \begin{tikzcd}
        E' \arrow[d] \arrow[r,"g"] & E \arrow[d,"p"] \\
        B' \arrow[r,swap,"f"] & B
      \end{tikzcd}
    \end{equation*}
    in which $p$ is $L$-\'etale and $f$ is an $L$-equivalence. Since $L$ preserves this pullback square, it follows that $Lg$ is a base change of the equivalence $Lf$. Therefore $Lg$ is an equivalence, so $g$ is an $L$-equivalence.

  To show that \ref{Lmod7} $\Longrightarrow$ \ref{Lmod1}, simply consider the pullback square
  \begin{equation*}
    \begin{tikzcd}
      \fib_\eta(y) \arrow[d] \arrow[r] & X \arrow[d,"\eta"] \\
      \unit \arrow[r,swap,"y"] & LX .
    \end{tikzcd}
  \end{equation*}
  The map $\eta:X\to LX$ is of course an $L$-equivalence, and the bottom map is $L$-\'etale because it is a map between $L$-local objects. Therefore the map on the left is an $L$-equivalence. In other words, $\eta$ is an $L$-connected map and therefore $L$ is a modality.

  For the final claim, assume that $L$ is a modality.
  Then $j_1$ is an equality, by \cite[Theorem~7.2]{CR}, summarized
  in the Introduction.
  In addition, $L'$ is a modality, by~\cite[Remark~2.16]{CORS},
  so the conditions in \cref{thm:main}\ref{it:L'-modality} hold.

  If $L$ is presented as a nullification, then the next result
  implies that $j_3$ is an equality, and it then follows that
  $i_3 = \prebot{j_3}$ is an equality.
\end{proof}

We now prove the result that implies the last claim of \cref{thm:main}\ref{it:L-modality}.

\begin{prop}\label{prop:j34}
  Let $L$ be presented as nullification at a family of types $B_i$,
  so that each $f_i$ is the unique map $B_i \to 1$.
  Then $j_4 \circ j_3 : \{f_i\}^{\bot} \hookrightarrow \{L\text{-local maps}\}$
  is an equality.
\end{prop}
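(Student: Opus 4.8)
The plan is to prove both inclusions of the equality, recalling from the excerpt that $j_3$ exists for any accessible $L$ and $j_4$ exists for any reflective subuniverse, so $j_4\circ j_3$ is already an inclusion $\{f_i\}^{\bot}\hookrightarrow\{L\text{-local maps}\}$; it remains to prove the reverse inclusion, namely that every $L$-local map $g:X\to Y$ is right orthogonal to each $f_i:B_i\to 1$. Since $\{f_i\}^{\bot}$ is characterized by a lifting condition, and since being right orthogonal to $f_i:B_i\to 1$ is a fibrewise condition, I would first reduce to the case $Y\jdeq 1$: the fibres of an $L$-local map are $L$-local, and $g\bot f_i$ holds iff each fibre $F$ of $g$ satisfies $F\bot f_i$ (because $f_i$ has contractible codomain, the lifting square against $g$ decomposes over $Y$). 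So it suffices to show: for every $L$-local type $F$, the map $F\to 1$ is right orthogonal to $B_i\to 1$.

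Unwinding, $F\to 1$ being right orthogonal to $B_i\to 1$ means precisely that the precomposition map $(1\to F)\to(B_i\to F)$, i.e.\ the map $F\to F^{B_i}$ sending $x$ to the constant function at $x$, is an equivalence. But this is exactly the statement that $F$ is $f_i$-local, since $f_i$ is the map $B_i\to 1$ and $f_i^{\ast}:(1\to F)\to(B_i\to F)$ is the constant-function map. By hypothesis $L$ is presented as nullification at the family $\{f_i\}$, so the $L$-local types are by definition exactly the $f_i$-local types for all $i$; hence every $L$-local $F$ is $f_i$-local, giving $F\bot f_i$, and therefore $g\bot f_i$ for every $L$-local map $g$ and every $i$. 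This shows $\{L\text{-local maps}\}\subseteq\{f_i\}^{\bot}$, completing the equality.

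The only step requiring any care is the fibrewise reduction, i.e.\ justifying that $g\bot(B_i\to 1)$ is equivalent to each fibre of $g$ being right orthogonal to $B_i\to 1$. This follows from the general principle that orthogonality against a map with terminal codomain is detected on fibres of the other map: a lifting problem of $B_i\to 1$ against $g:X\to Y$ consists of a point $y:Y$ together with a map $B_i\to\fib_g(y)$, and a solution is a point of $\fib_g(y)$ compatible with it, so the space of lifts for the square over $y$ is the fibre of $\fib_g(y)\to\fib_g(y)^{B_i}$; contractibility for all such squares is exactly contractibility of all these fibres, which is $F\to F^{B_i}$ being an equivalence for $F=\fib_g(y)$. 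I would phrase this cleanly using the description of $\bot$ via pullback of the cospan of precomposition/postcomposition squares, or simply cite the standard fibre-reformulation of right orthogonality against maps into $\unit$. Once that reduction is in place, everything else is immediate from the definition of nullification, so I do not expect a real obstacle here.
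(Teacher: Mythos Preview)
Your proposal is correct and follows essentially the same approach as the paper: reduce the lifting problem for $f_i:B_i\to 1$ against an $L$-local map $g:X\to Y$ to the fibre $F=\fib_g(y)$ (the paper phrases this as pulling $g$ back along $k:1\to Y$), and then observe that the resulting lifting problem is solved precisely because $F$ is $\{f_i\}$-local. Your write-up is a bit more explicit about the fibrewise reduction, but the argument is the same.
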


\begin{proof}
  Let $g : X \to Y$ be an $L$-local map.
  We must show that the type of lifts in any commuting square
  \[
    \begin{tikzcd}
      B_i \arrow[d,swap,"f_i"] \arrow[r,"h"] & X \arrow[d,"g"] \\
      1 \arrow[r,swap,"k"] & Y
    \end{tikzcd}
  \]
  is contractible.  But the type of lifts in such a square is the
  same as the type of lifts in the square
  \[
    \begin{tikzcd}
      B_i \arrow[d,swap,"f_i"] \arrow[r,"h"] & F \arrow[d,"g'"] \\
      1 \arrow[r,swap,"\id"] & 1 ,
    \end{tikzcd}
  \]
  where $g'$ is the pullback of $g$ along $k$.
  Since $F$ is $\{f_i\}$-local, the type of such lifts is con\-trac\-ti\-ble.
\end{proof}

Recall that, by~\cite[Theorem~2.41]{RSS}, any family of maps $\{f_i\}$
generates an orthogonal factorization system.
The above result implies that when the maps are of the form $B_i \to 1$,
this orthogonal factorization depends only on the localization presented
by the family of maps.
The next example shows that this is not true for a general presentation
of a modality.

\begin{eg}\label{eg:OGS}
  For each pointed type $B$, nullification at $B$ is presented in the
  standard way by the map $B \to 1$, but can also be presented by the
  map $1 \to B$.
  Consider $n$-truncation, for some $n : \N$.
  \cref{prop:j34} shows that for the standard presentation using the map
  $f : \sphere{n+1} \to 1$, we have $\{f\}^{\bot} = \{n\text{-truncated maps}\}$.
  On the other hand, consider the presentation of $n$-truncation
  using the map $g : 1 \to \sphere{n+1}$ for a chosen basepoint of $\sphere{n+1}$.
  By \cite[Theorem~3.10]{CR}, $\{g\}^{\bot}$ is equal to the $n$-\'etale maps,
  i.e., the \'etale maps for $n$-truncation.
  It is easy to give explicit examples showing that not every $n$-truncated map
  is $n$-\'etale, and this also follows from \cref{thm:main}\ref{it:lex},
  since $n$-truncation is not lex.
  Thus the inclusion $\{g\}^{\bot} \hookrightarrow \{f\}^{\bot}$ is strict.
  In other words, the orthogonal factorization systems generated by $f$ and $g$
  are not equal.
  In particular, for the presentation using $g$, $j_3$ is a strict inclusion.
\end{eg}

Now we give a result that partially answers \cref{q:j_1}.

\begin{prop}\label{prop:j_1}
  Let $L$ be a reflective subuniverse for which the unit maps $\eta : X \to LX$
  are all surjective.
  Then the following are equivalent:
  \begin{enumerate}
  \item\label{j_1:L-modality} The reflective subuniverse $L$ is a modality.
  \item\label{j_1:j_1} Every map that is right orthogonal to every $L$-equivalence is $L$-\'etale.
    That is, $j_1$ is an equality.
  \item\label{j_1:L-etale-pullback} The $L$-\'etale maps are closed under pullback.
  \end{enumerate}
  In particular, when $L = K'$ for some reflective subuniverse $K$,
  these conditions are equivalent, since the units for $L$ are surjective.
\end{prop}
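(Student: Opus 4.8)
The plan is to prove the cycle of implications \ref{j_1:L-modality} $\Longrightarrow$ \ref{j_1:j_1} $\Longrightarrow$ \ref{j_1:L-etale-pullback} $\Longrightarrow$ \ref{j_1:L-modality}, with the surjectivity hypothesis on the units entering only in the last implication; the first two hold for an arbitrary reflective subuniverse. The concluding sentence of the proposition then follows at once, since the units of $L'$ are always surjective by~\cite[Lemma~2.17]{CORS}.

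The implication \ref{j_1:L-modality} $\Longrightarrow$ \ref{j_1:j_1} is already recorded in \cref{thm:L-modality}: when $L$ is a modality, $j_1$ is an equality. For \ref{j_1:j_1} $\Longrightarrow$ \ref{j_1:L-etale-pullback}, if $j_1$ is an equality then the $L$-\'etale maps are exactly the maps in $\{L\text{-equivalences}\}^{\bot}$; and a right orthogonal complement $\mathcal{M}^{\bot}$ is always closed under base change, because a lifting problem for a map of $\mathcal{M}$ against a pullback of $g$ restricts to a lifting problem against $g$ itself with the same space of solutions. Hence the $L$-\'etale maps are closed under pullback.

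For the remaining implication \ref{j_1:L-etale-pullback} $\Longrightarrow$ \ref{j_1:L-modality}, I would invoke the characterization of modalities in \cref{thm:L-modality}\ref{Lmod7}: it is enough to show that the $L$-equivalences are stable under base change along $L$-\'etale maps. So consider a pullback square
\[
  \begin{tikzcd}
    E' \arrow[d,swap,"{p'}"] \arrow[r,"g"] & E \arrow[d,"p"] \\
    B' \arrow[r,swap,"f"] & B
  \end{tikzcd}
\]
in which $p$ is $L$-\'etale and $f$ is an $L$-equivalence. By \ref{j_1:L-etale-pullback}, the pullback $p'$ of $p$ along $f$ is again $L$-\'etale, so the square has both vertical maps $L$-\'etale and its bottom map an $L$-equivalence. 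Since the square is a pullback and the units of $L$ are surjective, \cref{lem:units-surj} applies and yields that $g$ is an $L$-equivalence, which is what we needed.

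I do not anticipate a serious difficulty here, as all the tools are already in place; the one thing to get right is to route \ref{j_1:L-etale-pullback} through condition~\ref{Lmod7} of \cref{thm:L-modality} and then through \cref{lem:units-surj}, rather than attempting a direct argument. The surjectivity hypothesis is indispensable precisely because it is the hypothesis of \cref{lem:units-surj}: in the cube built from the square above, it is surjectivity of the unit $B' \to LB'$ that transfers the pullback property between the front-left and back-right faces. This is also why dropping the hypothesis leaves \cref{q:j_1} open.
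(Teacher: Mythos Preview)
Your proof is correct and follows the same cycle \ref{j_1:L-modality}$\Rightarrow$\ref{j_1:j_1}$\Rightarrow$\ref{j_1:L-etale-pullback}$\Rightarrow$\ref{j_1:L-modality} as the paper, with the first two implications handled identically. The only divergence is in the last step: you route through \cref{thm:L-modality}\ref{Lmod7} and invoke \cref{lem:units-surj}, whereas the paper routes through \cref{thm:L-modality}\ref{Lmod6}, arguing directly via the pasting lemma and~\cite[Theorem~2.3]{CR} that $L$ preserves the pullback square. These are minor variants of the same mechanism---both ultimately use surjectivity of the unit $B'\to LB'$ to transfer a pullback condition between faces of the naturality cube---and your route is arguably tidier since it reuses a lemma already established in the paper rather than unpacking the cube argument again.
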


\begin{proof}
The implication \ref{j_1:L-modality} $\implies$ \ref{j_1:j_1} was proved in \cref{thm:L-modality}.

To prove that \ref{j_1:j_1} $\implies$ \ref{j_1:L-etale-pullback},
suppose that $p : E \to B$ is $L$-\'etale.  Since $p$ is right-orthogonal to
all $L$-equivalences, the same is true for any pullback of $p$.
Therefore, any pullback of $p$ is $L$-\'etale.

Finally, we prove that \ref{j_1:L-etale-pullback} $\implies$ \ref{j_1:L-modality}
by showing that \ref{j_1:L-etale-pullback} implies \cref{thm:L-modality}\ref{Lmod6}.
In other words, we will show that $L$ preserves any pullback square
    \begin{equation*}
      \begin{tikzcd}
        E' \arrow[d,swap,"p'"] \arrow[r,"g"] & E \arrow[d,"p"] \\
        B' \arrow[r,swap,"f"] & B
      \end{tikzcd}
    \end{equation*}
in which $p$ is $L$-\'etale.
Since $p$ is $L$-\'etale, it follows from the pasting lemma
(see, for example,~\cite[Corollary~2.1.17]{rijkethesis})
that the outer rectangle in
    \begin{equation*}
      \begin{tikzcd}
        E' \arrow[d,swap,"p'"] \arrow[r,"g"] & E \arrow[d,"p"] \arrow[r,"\eta"] & LE \arrow[d,"Lp"] \\
        B' \arrow[r,swap,"f"] & B \arrow[r,swap,"\eta"] & LB
      \end{tikzcd}
    \end{equation*}
is a pullback.
This outer rectangle is equal to the outer rectangle in
    \begin{equation*}
      \begin{tikzcd}
        E' \arrow[d,swap,"p'"] \arrow[r,"\eta"] & LE' \arrow[d,"Lp'"] \arrow[r,"Lg"] & LE \arrow[d,"Lp"] \\
        B' \arrow[r,swap,"\eta"] & LB' \arrow[r,swap,"Lf"] & LB .
      \end{tikzcd}
    \end{equation*}
The square on the left is a pullback, since we are assuming that the pullback $p'$
of $p$ is again $L$-\'etale.
Since the units are surjective, it follows from~\cite[Theorem~2.3]{CR}
that the square on the right is a pullback, as required.
\end{proof}

\section{Characterizing lex modalities}\label{se:lex}

In this section we prove \cref{thm:main}\ref{it:lex}, characterizing lex modalities. We recall that \cite[Theorem~3.1]{RSS} already contains a long list of conditions on a modality that are equivalent to it being lex. One difference between Theorem~3.1 of \cite{RSS} and our \cref{thm:main}\ref{it:lex} is that we only assume $L$ to be a reflective subuniverse, whereas the theorem in \cite{RSS} assumes $L$ to be a modality.

We also note that the $(-2)$-truncation, which is the trivial lex modality given by $X\mapsto \unit$, is an example of a lex modality $L$ for which $L'$ is not lex. Indeed, in this case $L'$ is propositional truncation, which is not lex because it doesn't preserve the pullback square
\begin{equation*}
  \begin{tikzcd}
    \varnothing \arrow[d] \arrow[r] & \unit \arrow[d,"1"] \\
    \unit \arrow[r,swap,"0"] & \mathbf{2} .\!
  \end{tikzcd}
\end{equation*}
However, since $L'$ is a modality whenever $L$ is a modality, it is in particular the case that $L'$ is a modality whenever $L$ is a lex modality.

\begin{thm}\label{thm:lex}
  Let $L$ be a reflective subuniverse. Then the following are equivalent:
  \begin{enumerate}
  \item \label{it-thm-lex:lex}$L$ is a lex modality.
  \item \label{it-thm-lex:lexRSU}$L$ is a lex reflective subuniverse.
  \item \label{it-thm-lex:all-eqs}Each of the inclusions $i_1 \circ \cdots \circ i_5$, $i_8$, $j_5 \circ \cdots \circ j_1$ and $j_8$ is an equality.
  \item \label{it-thm-lex:j4-j1}The inclusion $j_4\circ \cdots \circ j_1$ is an equality, i.e., every $L$-local map is $L$-\'etale.
  \item \label{it-thm-lex:j4-j2}The inclusion $j_4\circ j_3\circ j_2$ is an equality.
  \item \label{it-thm-lex:i2-i4}The inclusion $i_2\circ i_3\circ i_4$ is an equality, i.e., every $L$-equivalence is $L$-connected.
  \end{enumerate}
\end{thm}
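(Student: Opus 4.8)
The plan is to prove the cycle of implications
\ref{it-thm-lex:lex} $\Longrightarrow$ \ref{it-thm-lex:all-eqs} $\Longrightarrow$ \ref{it-thm-lex:j4-j1} $\Longrightarrow$ \ref{it-thm-lex:j4-j2} $\Longrightarrow$ \ref{it-thm-lex:i2-i4} $\Longrightarrow$ \ref{it-thm-lex:lexRSU} $\Longrightarrow$ \ref{it-thm-lex:lex}, together with the observation that \ref{it-thm-lex:lexRSU} $\Longleftrightarrow$ \ref{it-thm-lex:lex}, which will drop out once we know a lex reflective subuniverse is a modality. For \ref{it-thm-lex:lex} $\Longrightarrow$ \ref{it-thm-lex:all-eqs}, I would invoke \cite[Theorem~3.1]{RSS}: a lex modality has the property that every $L$-local map is $L$-\'etale (indeed, for a lex modality the stable and reflective factorization systems coincide, so $L$-connected $=$ $L$-equivalence and $L$-local $=$ $L$-\'etale). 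Combined with Part~\ref{it:prebot} of \cref{thm:main} (the left-hand equalities $i_1$, $i_5$) and the fact (from \cref{thm:L-modality}) that a lex modality is in particular a modality, so $L'$ is a modality and hence $j_8$ is an equality by \cref{thm:L'-is-a-modality}, all the displayed vertical inclusions in the first column collapse; the equalities $i_1 \circ \cdots \circ i_5$ and $j_5 \circ \cdots \circ j_1$ then follow by taking orthogonal complements on both sides. The implications \ref{it-thm-lex:all-eqs} $\Longrightarrow$ \ref{it-thm-lex:j4-j1} and \ref{it-thm-lex:j4-j1} $\Longrightarrow$ \ref{it-thm-lex:j4-j2} are immediate, since a composite inclusion is an equality iff each factor is, and $j_4 \circ j_3 \circ j_2$ is a sub-composite of $j_4 \circ \cdots \circ j_1$ (using the identification from \cref{thm:L-modality} that the relevant composites agree). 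Dually, \ref{it-thm-lex:j4-j2} $\Longrightarrow$ \ref{it-thm-lex:i2-i4} follows by applying $\prebot{(-)}$ and using the Part~\ref{it:prebot} equalities, which identify $\prebot{(\{\eta_X\}^\bot)} = \prebot{\{L\text{-local maps}\}}$ composed appropriately with $\{L\text{-equivalences}\} = \prebot{\{L\text{-\'etale maps}\}}$, so that $j_4 \circ j_3 \circ j_2$ being an equality is literally the same statement as $i_2 \circ i_3 \circ i_4$ being an equality, namely that every $L$-equivalence is $L$-connected.

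The substantive step is \ref{it-thm-lex:i2-i4} $\Longrightarrow$ \ref{it-thm-lex:lexRSU}: assuming every $L$-equivalence is $L$-connected, show $L$ preserves pullbacks. First, this hypothesis forces $L$ to be a modality: for any $X$, the unit $\eta_X$ is an $L$-equivalence by \cite[Lemma~2.9]{CORS}, hence $L$-connected, and a reflective subuniverse with $L$-connected units is a modality by \cite[Theorem~1.32]{RSS}. Now with $L$ a modality, I would argue that the $L$-connected maps are closed under base change (this is part of the stable factorization system for a modality, as recalled in the Introduction: the left class of an orthogonal factorization system is always stable under pullback). Then, given a pullback square, the gap map for its $L$-image can be analyzed as in the proof of \cref{thm:L'-is-a-modality}\ref{item:L'-preserves-L-cartesian}: one builds the commuting cube relating the square, its $L$-localization, and the units, and shows the gap map $L(A \times_X B) \to LA \times_{LX} LB$ is $L$-connected by exhibiting it as induced by the $L$-connected units $\eta_A$, $\eta_B$ together with the maps $\ap_\eta : (f(a) = g(b)) \to (\eta f(a) = \eta g(b))$, which are $L$-equivalences (being, up to equivalence, the units on identity types of $L$-local types) and hence $L$-connected by hypothesis; then \cite[Lemma~1.39]{RSS} gives that the map on total spaces is $L$-connected. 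But the gap map also has $L$-local target, and an $L$-connected map with $L$-local total space and base is an equivalence, so $L$ preserves the pullback. This gives \ref{it-thm-lex:lexRSU}. Finally \ref{it-thm-lex:lexRSU} $\Longrightarrow$ \ref{it-thm-lex:lex}: a lex reflective subuniverse preserves pullbacks, hence in particular preserves the pullback squares $\fib_\eta(y) \to X \to LX$ over $1 \to LX$, making the fibres of units $L$-connected — so $L$ is a modality, and being additionally lex, a lex modality. Since \ref{it-thm-lex:lex} trivially implies \ref{it-thm-lex:lexRSU}, the two are equivalent and the cycle closes.

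I expect the main obstacle to be the core of \ref{it-thm-lex:i2-i4} $\Longrightarrow$ \ref{it-thm-lex:lexRSU}, specifically the bookkeeping in the cube diagram and the verification that the relevant $\ap_\eta$ maps really are $L$-equivalences so that the hypothesis applies to them. The rest is a formal shuffle of orthogonal complements and the already-established Parts~\ref{it:prebot} and~\ref{it:L-modality} of \cref{thm:main}, plus citations to \cite{RSS} and \cite{CORS}; the only care needed there is to keep straight which composite inclusions are literally identical as propositions (via the $\prebot{(-)}$ identifications) versus merely equivalent.
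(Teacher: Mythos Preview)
Your overall chain of implications is close to the paper's, but there is a genuine gap in the step \ref{it-thm-lex:i2-i4} $\Longrightarrow$ \ref{it-thm-lex:lexRSU}.

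The problematic claim is that the maps $\ap_{\eta_X} : (f(a)=g(b)) \to (\eta f(a) = \eta g(b))$ are $L$-equivalences, justified as ``being, up to equivalence, the units on identity types of $L$-local types.'' This is incorrect: the domain $(f(a)=g(b))$ is an identity type in $X$, not in an $L$-local type, and $\ap_\eta$ is not a unit map. In fact, the statement that $\ap_\eta$ is an $L$-localization (equivalently, an $L$-equivalence, since the codomain is $L$-local) is itself one of the equivalent characterizations of lexness in \cite[Theorem~3.1]{RSS}, so as written your argument is circular. (You may be thinking of \cite[Proposition~2.26]{CORS}, which says $\ap_{\eta'}$ is an $L$-localization---that is about the $L'$-unit, not the $L$-unit.) For a concrete failure: with $L$ the $0$-truncation and $X = \sphere{1}$, $\ap_\eta : \Z \to \unit$ is not a $0$-equivalence.

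The paper sidesteps this entirely: once \ref{it-thm-lex:i2-i4} forces $i_4$ to be an equality, $L$ is a modality by \cref{thm:L-modality}, and then \cite[Theorem~3.1(xii)]{RSS} is simply cited to conclude lexness. Your argument can also be repaired directly: under the hypothesis, $L$-equivalences satisfy 2-out-of-3, so for any $L$-connected type $F$ the diagonal $\Delta : F \to F \times F$ is an $L$-equivalence (apply 2-out-of-3 to $F \to F \times F \to \unit$), hence $L$-connected by hypothesis; thus identity types of $L$-connected types are $L$-connected. Since the fibres of $\ap_\eta$ are identity types in the ($L$-connected) fibres of $\eta$, this gives $\ap_\eta$ $L$-connected, and the rest of your cube argument goes through.

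Separately, your argument for \ref{it-thm-lex:lexRSU} $\Longrightarrow$ \ref{it-thm-lex:lex} (applying $L$ to the pullback defining $\fib_\eta(y)$ to get $L$-connectedness of the units) is correct and is a nice alternative to the paper's approach, which instead verifies $\Sigma$-closure directly by applying $L$ to a fibre sequence with $L$-local base and fibres.
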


\begin{proof}
  It is clear that \ref{it-thm-lex:lex} implies \ref{it-thm-lex:lexRSU}.
  To prove the converse, suppose that $L$ is a lex reflective subuniverse.
  We show that $L$ is $\Sigma$-closed, in fibration form.
  Let $f : E \to B$ be an $L$-local map over an $L$-local base $B$.
  We will show that $E$ is $L$-local.
  For each $b : B$, consider the fibre $\fib_f(b)$.
  Applying $L$ gives a commutative diagram
\[
  \begin{tikzcd}
    \fib_f(b) \arrow[d]      \arrow[r,"\eta","\sim"'] & L\fib_f(b) \arrow[d] \\
    E         \arrow[d,"f"'] \arrow[r,"\eta"]         & LE         \arrow[d,"Lf"] \\
    B                        \arrow[r,"\eta","\sim"'] & LB
  \end{tikzcd}
\]
  in which the top and bottom arrows are equivalences by assumption.
  Since $L$ is lex, the right-hand column is also a fibre sequence.
  Since this holds for every $b : B$, we conclude that $\eta : E \to LE$
  is a fibrewise equivalence, and hence an equivalence.

  We next prove that \ref{it-thm-lex:lex} implies \ref{it-thm-lex:all-eqs}.
  Suppose that $L$ is a lex modality. Since $L$ is a modality, it follows by \cref{thm:main}\ref{it:prebot}--\ref{it:L-modality} that $i_1,i_4,i_5,i_8,j_1,j_4,j_5$ and $j_8$ are equalities. Furthermore, since $L$ is a lex modality,
 \cite[Theorem~3.1~(xii)]{RSS} says that $i_2 \circ \cdots \circ i_5$ is an equality.
 It follows that $j_5 \circ \cdots \circ j_2$ is equality, by applying $(-)^{\bot}$.

  The facts that \ref{it-thm-lex:all-eqs} implies \ref{it-thm-lex:j4-j1}, that \ref{it-thm-lex:j4-j1} implies \ref{it-thm-lex:j4-j2}, and that \ref{it-thm-lex:j4-j2} implies \ref{it-thm-lex:i2-i4} are immediate.
  For the last one, we apply $\prebot(-)$.

  We conclude by showing that \ref{it-thm-lex:i2-i4} implies \ref{it-thm-lex:lex}.
  If \ref{it-thm-lex:i2-i4} holds, then it follows that $i_4$ is an equality, and hence that $L$ is a modality.
  Therefore, using that $i_5$ is always an equality, we can
  apply~\cite[Theorem~3.1~(xii)]{RSS} to conclude that $L$ is lex.
\end{proof}

Note that the proof shows that \cite[Theorem~3.1~(xii)]{RSS} characterizes
lex modalities among all reflective subuniverses.

\section{Characterizing cotopological modalities}\label{se:L=L'}

In this section, we explain the concept of a cotopological modality
and prove \cref{thm:main}\ref{it:L=L'}.
For this, we need some definitions.

\begin{defn}
A map $f : X \to Y$ is \define{$\infty$-connected} if it is $n$-connected for each $n \geq -2$.
\end{defn}

It is equivalent to require that $\|f\|_n$ be an equivalence for each $n$.

\begin{defn}
A reflective subuniverse $L$ is \define{cotopological} if it is a lex modality
such that every $L$-connected map is $\infty$-connected.
\end{defn}

\begin{rmk}
This is one of three equivalent characterizations given in \cite[Theorem~3.22]{RSS},
another being given in \cref{thm:L=L'}\ref{LL'4} below.
It also corresponds to the notion of ``quasi-cotopological
localization'' of $\infty$-toposes given in \cite[Section~7.2]{Vergura1}.
One of Vergura's conditions, given in his Proposition 4.11, is that every
$L$-local map is $L$-\'etale, and by \cref{thm:main}\ref{it:lex},
this is equivalent to $L$ being lex.
Vergura's other condition is that every $L$-equivalence is $\infty$-connected,
and so Vergura's definition is equivalent to \cref{thm:L=L'}\ref{LL'6}.
Lurie's notion of ``cotopological localization''~\cite[Section~6.5.2]{lurie}
differs only in that it
also assumes accessibility.
\end{rmk}

We also need the following lemma.

\begin{lem}\label{lem:props-L-local}
Let $L$ be a reflective subuniverse.
If the unit maps $\eta : X \to LX$ are all surjective,
then every mere proposition is $L$-local.
The converse holds if $L$ is a modality.
\end{lem}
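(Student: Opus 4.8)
The plan is to prove the two directions separately, using in both cases the interplay between surjectivity of unit maps and $(-1)$-truncation.

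For the forward direction, suppose every unit $\eta : X \to LX$ is surjective and let $P$ be a mere proposition. I would consider the unit $\eta : P \to LP$. Since $\eta$ is surjective and $P$ is a mere proposition, $\eta$ is in particular a surjection from a mere proposition; I would argue that this forces $\eta$ to be an equivalence. The key point is that a surjection $f : P \to Q$ with $P$ a mere proposition makes $Q$ a mere proposition too (two points of $Q$ are merely hit from $P$, hence equal since $P$ is a proposition — and we can eliminate the truncation because being-equal is a proposition), and then a surjection between mere propositions is an equivalence (it is $(-1)$-connected and, being a map between $(-1)$-truncated types, also $(-1)$-truncated, hence an equivalence; alternatively just note it is both injective as a map of propositions and surjective). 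Therefore $P \simeq LP$, so $P$ is $L$-local.

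For the converse, assume $L$ is a modality and that every mere proposition is $L$-local; I want to show each $\eta : X \to LX$ is surjective. Surjectivity of $\eta$ means each fibre $\fib_\eta(y)$ is merely inhabited, i.e. $\brck{\fib_\eta(y)}$ is contractible, equivalently the composite $\fib_\eta(y) \to \unit$ is $(-1)$-connected. Since $L$ is a modality, the unit $\eta$ is $L$-connected, so each fibre $\fib_\eta(y)$ is $L$-connected, i.e. $L\fib_\eta(y)$ is contractible. Now I would use that $\brck{\fib_\eta(y)}$, being a mere proposition, is $L$-local by hypothesis, so the unit $\fib_\eta(y) \to L\fib_\eta(y)$ factors the map $\fib_\eta(y) \to \brck{\fib_\eta(y)}$ through $L\fib_\eta(y)$ via the universal property (the propositional truncation map is initial among maps to mere propositions, and $L\fib_\eta(y)$ receives a map from $\fib_\eta(y)$... more precisely, $\brck{\fib_\eta(y)}$ being $L$-local, the map $\fib_\eta(y) \to \brck{\fib_\eta(y)}$ extends along $\eta$ to a map $L\fib_\eta(y) \to \brck{\fib_\eta(y)}$). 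Since $L\fib_\eta(y)$ is contractible, $\brck{\fib_\eta(y)}$ is a retract of a contractible type, hence contractible, so $\fib_\eta(y)$ is merely inhabited. As this holds for all $y : LX$, the map $\eta$ is surjective.

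The main obstacle, and the step warranting the most care, is the converse direction: correctly deducing that $\brck{\fib_\eta(y)}$ is contractible from the contractibility of $L\fib_\eta(y)$ together with $L$-locality of mere propositions. The cleanest route is the retract argument — the $L$-local mere proposition $\brck{\fib_\eta(y)}$ receives a map from $\fib_\eta(y)$, which extends along $\eta : \fib_\eta(y) \to L\fib_\eta(y)$ by the universal property of localization, and composing with the truncation map $\fib_\eta(y) \to \brck{\fib_\eta(y)}$ exhibits $\brck{\fib_\eta(y)}$ as a retract of $L\fib_\eta(y)$; one should double-check the retract triangle commutes, which follows since both composites are maps out of $\fib_\eta(y)$ into the mere proposition $\brck{\fib_\eta(y)}$ and hence automatically agree. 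The forward direction is routine given the standard facts about surjections and mere propositions.
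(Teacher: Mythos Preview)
Your forward direction contains a genuine gap. The claim that a surjection $f : P \to Q$ from a mere proposition forces $Q$ to be a mere proposition is false: take $P = \unit$, $Q = \sphere{1}$, and $f$ the inclusion of the basepoint. Then $f$ is surjective because $\sphere{1}$ is $0$-connected, yet $\sphere{1}$ is not a proposition. The error lies in the step ``we can eliminate the truncation because being-equal is a proposition'': identity types in $Q$ are propositions only if $Q$ is a set, and you have not established this for $Q = LP$.

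The paper sidesteps this by never claiming that $LP$ is a proposition. Instead, from surjectivity of $\eta : P \to LP$ it extracts a retraction $LP \to P$ directly: for $p : LP$ one has $\brck{\fib_\eta(p)}$, and the first projection $\fib_\eta(p) \to P$ lands in the proposition $P$, so it descends to $\brck{\fib_\eta(p)} \to P$; this yields $r : LP \to P$. Since $r \circ \eta$ is an endomap of the proposition $P$, it equals $\id_P$, so $P$ is a retract of the $L$-local type $LP$ and hence $L$-local. Only \emph{a posteriori} is $\eta$ an equivalence and $LP$ a proposition --- that is the conclusion, not an input.

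Your converse is correct, though the ``retract'' phrasing is slightly tangled; all you really need is that the contractible type $L\fib_\eta(y)$ admits a map to $\brck{\fib_\eta(y)}$ (via the universal property of $L$-localization, since the target is $L$-local by hypothesis), so the target is inhabited. The paper takes a different but equally short route: since $\brck{\fib_\eta(-)}$ is a family of $L$-local types over $LX$ and $L$ is a modality, dependent elimination along $\eta$ reduces $\prd{y:LX}\brck{\fib_\eta(y)}$ to $\prd{x:X}\brck{\fib_\eta(\eta(x))}$, which is visibly inhabited.
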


\begin{proof}
Let $P$ be a mere proposition and consider $\eta : P \to LP$.
We have the following chain of equivalences:
\begin{align*}
         \issurj(\eta)
 &=      \prd{p : LP} \| \fib_{\eta}(p) \|_{-1} \\
 &=      \prd{p : LP} \| \sm{q:P} \eta(q) = p \|_{-1} \\
 &\simeq \prd{p : LP} \| P \|_{-1} \\
 &\simeq \prd{p : LP} P \\
 &=      (LP \to P).
\end{align*}
If we have a map $LP \to P$, then $P$ is a retract of $LP$ and is
therefore $L$-local.
So the unit for $P$ is surjective if and only if $P$ is $L$-local,
which in particular proves the first claim.

Now suppose that $L$ is a modality and that every mere proposition is $L$-local.
Then, for any type $X$, dependent elimination gives us an equivalence
\[
  \prd{x:LX} \| \fib_{\eta}(x) \|_{-1} \simeq \prd{x:X} \| \fib_{\eta}(\eta(x)) \|_{-1}.
\]
The right hand side is inhabited, so $\eta : X \to LX$ is surjective.
\end{proof}

We can now give the main theorem of this section,
which proves and slightly extends \cref{thm:main}\ref{it:L=L'}.

\begin{thm}\label{thm:L=L'}
Let $L$ be a reflective subuniverse.  Then the following are equivalent:
    \begin{enumerate}
    \item\label{LL'1} $L = L'$, i.e., every $L'$-local map is $L$-local.
    \item\label{LL'2} All of the inclusions in \cref{thm:main} are equalities,
          including those not displayed.
    \item\label{LL'3} Every $L'$-\'etale map is $L$-local.
    \item\label{LL'4} $L$ is lex and every $L$-connected mere proposition is contractible.
    \item\label{LL'5} $L$ is cotopological.
    \item\label{LL'6} $L$ is lex and every $L$-equivalence is $\infty$-connected.
    \item\label{LL'7} $L$ is lex and every unit $\eta$ is surjective.
    \item\label{LL'8} $L$ is lex and every mere proposition is $L$-local.
    \item\label{LL'9} $L$ is lex and every truncated type is $L$-local.
    \end{enumerate}
\end{thm}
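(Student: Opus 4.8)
The plan is to establish the nine equivalences by organizing them around two pivots: the condition ``$L$ is lex'' (which we may freely substitute for ``$L$ is a lex modality'' by \cref{thm:lex}) and the condition ``every mere proposition is $L$-local.'' Most of the work is bundling together results already available. I would prove the cycle \ref{LL'1} $\Rightarrow$ \ref{LL'2} $\Rightarrow$ \ref{LL'3} $\Rightarrow$ \ref{LL'1} first, then separately show \ref{LL'1} $\Leftrightarrow$ \ref{LL'4} $\Leftrightarrow$ \ref{LL'5} $\Leftrightarrow$ \ref{LL'6} using the characterizations in \cite[Theorem~3.22]{RSS}, and finally close with \ref{LL'7} $\Leftrightarrow$ \ref{LL'8} $\Leftrightarrow$ \ref{LL'9} $\Leftrightarrow$ \ref{LL'1} using \cref{lem:props-L-local}.

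For the first cycle: \ref{LL'1} $\Rightarrow$ \ref{LL'2} because if $L = L'$ then $L = L^{(n)}$ for all $n$, so every column of the diagram coincides with the one below it, which forces every vertical and diagonal inclusion (displayed or not) to be an equality — in particular $j_6$ exists and is an equality, and $i_6$, $i_7$, $j_7$ collapse. The implication \ref{LL'2} $\Rightarrow$ \ref{LL'3} is immediate since \ref{LL'2} makes $j_6$ an equality, hence every $L'$-\'etale map is $L$-local. For \ref{LL'3} $\Rightarrow$ \ref{LL'1}: a general $L'$-local map $g$ factors through the $L'$-stable factorization; more directly, one shows that if every $L'$-\'etale map is $L$-local then in particular the units $\eta'_X : X \to L'X$, which are $L'$-\'etale trivially (their $L'$-naturality square is the identity square on $\eta'_X$ since $L'(\eta'_X)$ is an equivalence), are $L$-local — and by \cref{prop:right-orthogonal-is-local}-style retract arguments this forces $L'X$ to be $L$-local, giving $L' \subseteq L$; the reverse inclusion $L \subseteq L'$ always holds since $L$-local types have $L$-local identity types. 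Hence $L = L'$.

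For the middle block, I would invoke \cite[Theorem~3.22]{RSS}, which lists the equivalences between: $L$ cotopological, $L = L'$, $L$ lex with every $L$-connected mere proposition contractible, and $L$ lex with every $L$-equivalence $\infty$-connected. These give \ref{LL'1} $\Leftrightarrow$ \ref{LL'4} $\Leftrightarrow$ \ref{LL'5} $\Leftrightarrow$ \ref{LL'6} directly (using that when $L$ is lex it is a modality, so $L$-equivalences and $L$-connected maps coincide, matching up the conditions). The remaining block is handled by \cref{lem:props-L-local}: when $L$ is lex (hence a modality), ``every mere proposition is $L$-local'' $\Leftrightarrow$ ``every unit is surjective'' gives \ref{LL'7} $\Leftrightarrow$ \ref{LL'8}, and \ref{LL'8} $\Leftrightarrow$ \ref{LL'9} follows because once mere propositions are $L$-local, induction on truncation level shows every $n$-truncated type is $L$-local (the identity types drop one level, and $L$ being a modality means $\Sigma$-types of $L$-local families are $L$-local). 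Finally \ref{LL'8} $\Leftrightarrow$ \ref{LL'4}: a mere proposition $P$ is $L$-local iff $\eta : P \to LP$ has an inverse; combined with lexness, $L$-connected mere propositions being contractible is exactly the statement that the only way $\|{\fib}\|$ of a relevant unit can be trivial is when the proposition already was — unwinding this against \cref{lem:props-L-local} gives the equivalence. The main obstacle is the careful bookkeeping in \ref{LL'3} $\Rightarrow$ \ref{LL'1}: one must be sure that ``$L'$-\'etale'' is the right hypothesis to pin down $L'X$ as $L$-local, which requires checking that units $\eta'$ are $L'$-\'etale and running the retract argument, rather than any deeper input. The last displayed sentence, ``Also, when $L = L'$, the conditions in \ref{it:lex} hold,'' is then immediate from \ref{LL'5} and \cref{thm:lex}.
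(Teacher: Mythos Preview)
Your argument for \ref{LL'3} $\Rightarrow$ \ref{LL'1} contains a genuine error: the units $\eta'_X : X \to L'X$ are \emph{not} $L'$-\'etale in general. The $L'$-naturality square of $\eta'_X$ is
\[
\begin{tikzcd}
X \arrow[r,"\eta'"] \arrow[d,swap,"\eta'"] & L'X \arrow[d,"L'\eta'"] \\
L'X \arrow[r,swap,"\eta'"] & L'L'X,
\end{tikzcd}
\]
and since the bottom and right maps are both equivalences, the pullback of the cospan is $L'X$; hence the square is a pullback precisely when $\eta'_X$ is an equivalence, i.e., when $X$ was already $L'$-local. The paper's route is simpler and correct: if $X$ is $L'$-local then $X \to \unit$ is a map between $L'$-local types and is therefore trivially $L'$-\'etale, so by \ref{LL'3} the map $X \to \unit$ is $L$-local, i.e., $X$ is $L$-local.

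There are two further gaps. First, your induction for \ref{LL'8} $\Rightarrow$ \ref{LL'9} only shows that an $(n{+}1)$-type $X$ is $L'$-local (its identity types are $L$-local by hypothesis); $\Sigma$-closure of $L$ does not bridge from $L'$-local to $L$-local. The paper first deduces \ref{LL'1} from \ref{LL'8}, so that $L'$-local and $L$-local coincide, and then runs the induction. Second, you over-attribute to \cite[Theorem~3.22]{RSS}: it does not list $L = L'$ among its equivalent conditions, so your middle block still owes an argument linking \ref{LL'1} to \ref{LL'4}--\ref{LL'6}. The paper provides \ref{LL'1} $\Rightarrow$ \ref{LL'4} directly (mere propositions are always $L'$-local, hence $L$-local when $L=L'$) and, in the other direction, goes \ref{LL'6} $\Rightarrow$ \ref{LL'7} $\Rightarrow$ \ref{LL'1}; the last implication uses a pullback argument with the diagonal $\Delta : X \to X\times X$ and is the one genuinely new idea in the proof, which your outline does not supply.
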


\begin{proof}
  \ref{LL'1} $\implies$ \ref{LL'2}:
  Suppose $L = L'$.  Then $\{L\text{-equivalences}\}^{\bot} = \{L'\text{-equivalences}\}^{\bot}$,
  which implies that $L$ is lex and $j_7$ is an equality.
  It follows that all of the displayed inclusions are equalities.
  Since $L' = L''$, the next seven inclusions are equalities as well.
  We see inductively that all of the inclusions are equalities.

  \ref{LL'2} $\implies$ \ref{LL'3}:
  Clearly, if all of the inclusions are equalities, then $j_6$ exists and is
  an equality.

  \ref{LL'3} $\implies$ \ref{LL'1}:
  Now suppose that every $L'$-\'etale map is $L$-local.
  If $X$ is $L'$-local, then $X \to 1$ is $L'$-\'etale, since $L'1 = 1$
  for every reflective subuniverse, so $X \to 1$ is $L$-local.
  In other words, the $L$-local types agree with the $L'$-local types,
  which means that $L = L'$.

  \ref{LL'1} $\implies$ \ref{LL'4}:
  Suppose $L = L'$.  Since \ref{LL'1} $\implies$ \ref{LL'2}, we see that $L$ is lex,
  using \cref{thm:main}\ref{it:lex}.
  And since every mere proposition is $L'$-local,
  it follows that every mere proposition is $L$-local.
  In particular, an $L$-connected mere proposition is contractible.

  For the remaining items, recall from \cref{thm:lex} that a
  lex reflective subuniverse is automatically a modality.

  \ref{LL'4} $\iff$ \ref{LL'5} is part of \cite[Theorem~3.22]{RSS}.
  And for a lex modality, being an $L$-equivalence is equivalent to
  being $L$-connected, so \ref{LL'5} $\iff$ \ref{LL'6}.

  \ref{LL'6} $\implies$ \ref{LL'7}:
  This is clear because every unit is an $L$-equivalence and
  $\infty$-connected maps are $(-1)$-connected, i.e., surjective.

  \ref{LL'7} $\implies$ \ref{LL'1}:
  Assume that $L$ is lex and all units are surjective.
  We will show that every $L'$-local type is $L$-local.
  Suppose that $X$ is $L'$-local, and consider the commuting square
  \[
    \begin{tikzcd}
      X \arrow[r,"\eta"] \arrow[d,swap,"\Delta"] & LX \arrow[d,"\Delta"] \\
      X^2 \arrow[r,swap,"\eta \times \eta"] & (LX)^2 .
    \end{tikzcd}
  \]
  Localizations respect products, so the bottom arrow is equivalent to the unit.
  Since $X$ is $L'$-local, the left-hand map is $L$-local.
  Since $L$ is lex, $L$-local maps are $L$-\'etale, so the square is a pullback.
  It follows that the induced map on the fibres, which is equivalent to
  $\ap_{\eta} : x_0 = x_1 \to \eta x_0 = \eta x_1$, is an equivalence.
  In other words, $\eta$ is a monomorphism.
  Since we also are assuming that the units are surjective, $\eta$ is an equivalence.
  That is, $X$ is $L$-local, as required.

  \ref{LL'7} $\iff$ \ref{LL'8}:  This follows from \cref{lem:props-L-local}, since $L$ is a modality.

  \ref{LL'8} $\implies$ \ref{LL'9}:  We know from the above that \ref{LL'8} implies \ref{LL'1},
  so we may assume that both of these hold.
  We prove \ref{LL'9} inductively.
  By \ref{LL'8}, every $(-1)$-type is $L$-local.
  For $n \geq -1$, let $X$ be an $(n+1)$-type.
  The identity types of $X$ are $n$-types and are $L$-local by induction.
  Therefore, $X$ is $L'$-local.  So, by \ref{LL'1}, $X$ is $L$-local.

  \ref{LL'9} $\implies$ \ref{LL'8}:  This is trivial.
\end{proof}

The equivalence between \ref{LL'1} and \ref{LL'6} is proved for $\infty$-toposes in
\cite[Theorem~7.3]{Vergura1}, and our proof that \ref{LL'7} implies \ref{LL'1} is
adapted from Vergura's proof.

\begin{eg}
In the homotopy theory of topological spaces, there is a reflective subuniverse
$L$ such that $L$ is not cotopological, but $L'$ is cotopological.
Put another way, we have $L \neq L' = L''$.
The only ingredient one needs to construct $L$ is a non-trivial
\define{acyclic} space, that is, a non-contractible space $B$ whose suspension
$\susp B$ is contractible, and it is well-known in topology that these exist.
Let $L$ be nullification at such a space $B$.
Then $L$ is non-trivial, as
$B$ is not $L$-local but every loop space is $L$-local.
But $L'$ is nullification at $\susp B \simeq 1$, by \cite[Lemma~2.15]{CORS}, so every space is $L'$-local
and $L' = L''$.
We expect the construction of such a $B$ to be possible in
homotopy type theory.
\end{eg}

\section{The inclusions can be strict}\label{se:strict}

In this section, we give examples showing that all of the
inclusions in the main theorem can be strict.

\begin{eg}\label{eg:degk}
  For $k : \N$, we define the \define{degree $k$ map}
  $\degg(k) : \sphere{1} \to \sphere{1}$
  by sending $\base$ to $\base$ and $\lloop$ to $\lloop^k$.
  Let $L_{\degg(k)}$ be localization with respect to this map.
  Then $L_{\degg(k)}' = L_{\susp \degg(k)}$, where $\susp$ denotes the suspension.
  More generally, for $n : \N$, $L_{\degg(k)}^{(n)} = L_{\susp^n \degg(k)}$,
  where $(-)^{(n)}$ denotes applying $(-)'$ $n$ times.
  In~\cite[Example 4.8]{CORS} it is shown that $L_{\degg(k)}$ is not a modality
  for $k > 1$, and the example generalizes to $L_{\susp^n \degg(k)}$ as follows.
  Consider the map $K(\Z, n+1) \to K(\Q, n+1)$ induced by the inclusion $\Z \to \Q$.
  The fibres of this map are $L_{\susp^n \degg(k)}$-local and $K(\Q, n+1)$ is
  $L_{\susp^n \degg(k)}$-local, but $K(\Z, n+1)$ is not $L_{\susp^n \degg(k)}$-local,
  so $L_{\susp^n \degg(k)}$ is not $\Sigma$-closed.
  These claims can be checked directly, and also follow from the case proven
  in~\cite{CORS}.
\end{eg}

\begin{thm}\label{thm:strict}
  For each of the inclusions not drawn as equalities,
  including $j_6$ and those not displayed,
  there exists an accessible reflective subuniverse $L$ making the inclusion strict.
\end{thm}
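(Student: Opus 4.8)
The plan is a case analysis over the inclusions, supplying a concrete accessible example for each. The ``shift'' structure of the diagram reduces the work: if $L$ is one of our examples then $L' = L^{(1)}$ governs the next block of seven inclusions, so it is enough to arrange that each example keeps working after repeatedly applying $(-)'$. Since the inclusions $i_{1+7n}$ and $i_{5+7n}$ are always equalities, the inclusions needing an example are $i_2, i_3, i_4, i_6, i_7$ and $j_1, \dots, j_7$ together with all their shifts (in particular $j_8$, which is ``$j_1$ for $L'$''); note also that $i_6$ and $i_7$ are literally the same inclusion, since $i_8$ is always an equality, and $j_6 = j_7$ whenever $L$ is a modality. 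Two families of examples suffice: the $n$-truncations $\|\blank\|_n$ for $n \geq -1$, whose $m$-fold primed version is $\|\blank\|_{n+m}$, and the localizations $L_{\degg(k)}$ of \cref{eg:degk} for $k > 1$, whose $m$-fold primed version is $L_{\susp^m\degg(k)}$ and which is never a modality by \cref{eg:degk}.

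First I would dispose of the inclusions tied to modality. By \cref{thm:L-modality}, $i_4$, $j_4$ and $j_5$ are equalities precisely when $L$ is a modality, so $L = L_{\degg(k)}$ with $k > 1$ makes all three strict, and the same reasoning applied to $L_{\susp^m\degg(k)}$ handles their shifts. For $j_1$ I would invoke \cref{prop:j_1}: the units $\eta_X : X \to L_{\degg(k)}X$ are surjective --- since $\sphere 1$ is connected, every set is $\degg(k)$-local and $\|L_{\degg(k)}X\|_0 \simeq \|X\|_0$, so $\eta_X$ is surjective --- and, $L_{\degg(k)}$ not being a modality, \cref{prop:j_1} forces $j_1$ to be strict. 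For the shifts $j_{1+7m}$ with $m \geq 1$ this is automatic, because primed subuniverses always have surjective units and $L_{\susp^m\degg(k)}$ is still not a modality; in particular $j_8$ is handled by $L' = L_{\susp\degg(k)}$.

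Next I would treat the remaining inclusions with $n$-truncation, $n \geq -1$, after rewriting both endpoints of each inclusion as a familiar class of maps. Using \cref{prop:L-equivalence}, \cref{prop:L-connected}, and the fact that $\|\blank\|_n$ is a modality (so $\{\eta_X\}^\bot = \{n\text{-truncated maps}\}$ and hence $\prebot{(\{\eta_X\}^\bot)} = \{n\text{-connected maps}\}$), together with \cref{prop:j34} for the standard presentation $\sphere{n+1} \to 1$ (giving $\{f_i\}^\bot = \{n\text{-truncated maps}\}$) and \cite[Theorem~3.10]{CR} for the presentation $1 \to \sphere{n+1}$ (giving $\{f_i\}^\bot = \{n\text{-\'etale maps}\}$), the inclusions become: $i_2$ and $i_3$ both become $\{n\text{-connected}\} \hookrightarrow \{n\text{-equivalences}\}$ (the former via the standard presentation, the latter via $1 \to \sphere{n+1}$, as in \cref{eg:OGS}); $j_2$ and $j_3$ both become $\{n\text{-\'etale}\} \hookrightarrow \{n\text{-truncated}\}$; $i_6 = i_7$ becomes $\{(n+1)\text{-equivalences}\} \hookrightarrow \{n\text{-connected}\}$; and $j_6 = j_7$ becomes $\{n\text{-truncated}\} \hookrightarrow \{(n+1)\text{-\'etale}\}$. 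Strictness is witnessed by explicit maps: $1 \to \sphere{n+1}$ is an $n$-equivalence but not $n$-connected, its fibre $\loopspace\sphere{n+1}$ having nontrivial $\pi_n$; $\sphere{n+1} \to 1$ is $n$-connected but not an $(n+1)$-equivalence, its $(n+1)$-truncation being $K(\Z, n+1) \to 1$; the map $K(\Z, n+1) \to 1$, being a map between $(n+1)$-truncated types, is $(n+1)$-\'etale but not $n$-truncated; and not every $n$-truncated map is $n$-\'etale, by \cref{eg:OGS}. The shifts follow by passing from $n$ to $n+m$, suspending the presentation $1 \to \sphere{n+1}$ to $1 \to \sphere{n+m+1}$ where it is used.

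The main obstacle is not choosing the examples but handling orthogonality with care: strictness of an inclusion $\mathcal P \subsetneq \mathcal Q$ is \emph{not} in general inherited by $\prebot{\mathcal Q} \subseteq \prebot{\mathcal P}$ or by $\mathcal Q^{\bot} \subseteq \mathcal P^{\bot}$. So for every left-column inclusion, and for $j_6 = j_7$, one must genuinely identify both endpoints as familiar classes and exhibit a witness in the honest set-difference, rather than transporting strictness through $\prebot{(-)}$ or $(-)^{\bot}$; the concrete rewrites above are exactly what make this possible. A secondary point needing a short argument is the surjectivity of the $L_{\degg(k)}$-units, which unlocks \cref{prop:j_1} for the $j_1$ and $j_8$ cases, together with a check of the $n = -1$ boundary cases (e.g.\ $1 \to \mathbf 2$ is a $(-1)$-equivalence but not surjective) --- or, more simply, the observation that arbitrarily large $n$ already realize every shift, so the boundary case can be ignored.
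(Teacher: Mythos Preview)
Your argument is correct and covers every case. The paper takes a more structural route: it repeatedly exploits the fact that $i_k$ is an equality iff $j_k$ is (since each left-column class equals $\prebot{(-)}$ of its mirror, and applying $(-)^\bot$ recovers the right-hand class), so only one of each pair needs a direct argument. Thus for $i_2, j_2$ the paper observes that for a non-lex nullification $j_4 \circ j_3$ is an equality by \cref{prop:j34} while $j_4 \circ j_3 \circ j_2$ is not by \cref{thm:lex}, forcing $j_2$ strict and hence $i_2$ strict by duality; the diagonal arrows $i_6,i_7,j_6,j_7$ are handled the same way. For $j_1$ the paper simply takes $L$ of the form $K'$ with $K'$ not a modality (e.g.\ $K = L_{\degg(k)}$), since $j_1$-for-$K'$ is literally $j_8$-for-$K$, which is strict by \cref{thm:L'-is-a-modality}.

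Your approach instead identifies both endpoints of each inclusion with a concrete class and exhibits a specific map in the set-difference. This is more explicit and yields named witnesses throughout, at the cost of a few extra verifications. One of these deserves to be stated: your $j_1$ argument needs the lemma that every $0$-equivalence is surjective, in order to pass from $\|L_{\degg(k)}X\|_0 \simeq \|X\|_0$ to surjectivity of $\eta_X$. This is true and routine, but \cref{lem:props-L-local} alone does not supply it, since its converse direction requires $L$ to be a modality. You could also sidestep this by taking $L = L_{\susp\degg(k)}$ for the $j_1$ case, as the paper effectively does, since primed units are automatically surjective.
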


\begin{proof}
  By \cref{thm:main}\ref{it:L'-modality}, for a reflective subuniverse $L$
  such that $L'$ is not a modality, $i_4$, $j_4$, $j_5$ and $j_8$ are strict.
  Taking $L$ to be such an $L'$ gives a case where $j_1$ is strict.
  By \cref{eg:degk}, there is an accessible reflective subuniverse $L$ so that each $L^{(n)}$
  is not a modality, which shows that $i_{4+7n}$, $j_{4+7n}$, $j_{5+7n}$ and $j_{8+7n}$
  are all strict.

  We have that $i_2 = \prebot{j_2}$ and $j_2 = i_2^{\bot}$, so we either
  have that both are equalities, or that both are strict.
  Let $L$ be a nullification which is not lex.
  For example, one can take $\{f_i\} = \{S^1 \to 1\}$, so that $L$ is $0$-truncation.
  By \cref{thm:main}\ref{it:L-modality}, $j_4 \circ j_3$ is an equality.
  Since $L$ is not lex, we must have that $j_2$ is strict, by \cref{thm:main}\ref{it:lex}.
  Therefore, $i_2$ is strict as well.
  By considering $n$-truncation, we see that $i_{2+7n}$ and $j_{2+7n}$ can be strict.
  As a concrete example, when $L$ is $0$-truncation,
  the degree $k$ map $\degg(k) : S^1 \to S^1$, for $k > 1$,
  is $L$-local (and therefore in $\{f_i\}^{\bot}$)
  but is not $L$-\'etale (and therefore is not in $\{L\text{-equivalences}\}^{\bot}$).
  We can also see that $i_2$ is strict:
  $\degg(k)$ is an $L$-equivalence but is not $L$-connected.

  We have that $i_3 = \prebot{j_3}$ and $j_3 = i_3^{\bot}$, so we either
  have that both are equalities, or that both are strict.
  We saw in \cref{eg:OGS} that $j_3$ is strict for a non-standard presentation of $n$-truncation.
  It follows that $i_{3+7n}$ and $j_{3+7n}$ can be strict.

  Next we handle the diagonal arrows.
  Clearly $i_6$ is an equality if and only if $i_7$ is.
  Moreover, $i_7 = \prebot{j_7}$ and $j_7 = i_7^{\bot}$, so $i_7$ is an
  equality if and only if $j_7$ is.
  Examples where $i_7$ is not an equality are easy to find, such
  as $n$-truncation for any $n$.
  Note that \cref{thm:main}\ref{it:L-modality} implies that $j_6$ exists in this case,
  since $n$-truncation is a modality.
  However, $j_6$ is also not an equality, since $i_6 = \prebot{j_6}$.
  In fact, using \cref{thm:main}\ref{it:L=L'}, $j_6$ exists but is not an equality
  whenever $L'$ is a modality but $L \neq L'$.
  It follows that $i_{6+7n}$, $i_{7+7n}$, $j_{6+7n}$ and $j_{7+7n}$ can be strict.
\end{proof}

\section{Two non-inclusions}\label{se:neither}

In this section, we prove \cref{thm:main}\ref{it:neither}, which we
restate here.

\begin{thm}\label{thm:neither}
  In general, neither of $\{L\text{-connected maps}\}^\bot$ and
  $\{L'\text{-\'etale maps}\}$ includes in the other.
\end{thm}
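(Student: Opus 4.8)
The plan is to prove the two non-inclusions separately, in each case by exhibiting a concrete accessible reflective subuniverse and a concrete map that lies in one class but not the other.

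For the first direction, I want to find $L$ and a map $f$ which is right orthogonal to every $L$-connected map but is not $L'$-\'etale. The natural place to look is an $L$ for which $L = L'$ fails but which is otherwise well-behaved, so that the two classes have a chance to genuinely diverge; $n$-truncation for $n \geq -1$ is the obvious candidate, since it is a (lex-free) modality with $L' = (n{+}1)$-truncation $\neq L$. For the $n$-truncation modality, $\{L\text{-connected maps}\}^\bot$ is the class of $n$-truncated maps (by \cref{thm:L-modality}, since $j_5$ is an equality for a modality), while $\{L'\text{-\'etale maps}\}$ is the class of $(n{+}1)$-\'etale maps. So I need an $n$-truncated map that is not $(n{+}1)$-\'etale. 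By the Introduction's discussion (citing \cite[Theorem~3.10]{CR}), the $(n{+}1)$-\'etale maps are precisely those right orthogonal to $1 \to S^{n+2}$. A clean example: take $f$ to be a map whose fibres are $K(\Z, n{+}1)$'s — for instance the map $\ast \to K(\Z, n{+}1)$, or better, use the degree-$k$ style example from \cref{eg:degk}. Concretely, $\degg(k) : S^1 \to S^1$ with $k > 1$ considered via the modality $L = $ $0$-truncation: its fibres are $0$-truncated (they are $\Z/k$, or truncated covers), so it is $0$-truncated, but it is not $1$-\'etale — indeed $1$-\'etale for $0$-truncation would force $L'$-\'etale behaviour, and $\degg(k)$ is the standard example (already used in \cref{eg:OGS} and \cref{thm:strict}) of a $0$-truncated map that is not $0$-\'etale, and one checks it is not $1$-\'etale either by examining the relevant naturality square with $L' = 1$-truncation.

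For the converse direction, I want $L$ and a map $g$ that is $L'$-\'etale but not right orthogonal to every $L$-connected map, i.e.\ not in $\{L\text{-connected maps}\}^\bot$. Again take $L = $ $n$-truncation, so that $\{L\text{-connected maps}\}^\bot = \{n\text{-truncated maps}\}$ and $L'\text{-\'etale} = (n{+}1)\text{-\'etale}$. I need an $(n{+}1)$-\'etale map that is not $n$-truncated. Since every equivalence is $(n{+}1)$-\'etale and $n$-truncated, I need something genuinely not $n$-truncated; the map $1 \to S^{n+2}$ itself is a candidate — it is $(n{+}1)$-\'etale by definition of the presentation of $(n{+}1)$-truncation (it is the generating map $g$ in the sense of \cref{eg:OGS}, which is $\{g\}^\bot$-orthogonal to itself... no, wait — $1 \to S^{n+2}$ is the \emph{left} generator, so the $(n{+}1)$-\'etale maps are those \emph{right} orthogonal to it, and I should check whether $1 \to S^{n+2}$ is itself right orthogonal to $1 \to S^{n+2}$). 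More robustly: take $g = \eta' : X \to L'X$ to be an $L'$-unit; $L'$-units are $L'$-\'etale (being $L'$-connected maps... no). Let me instead use: for $L$ a modality, any $L'$-\'etale map with non-$n$-truncated fibre, such as $L'X \to 1$ for a type $X$ with $L'X$ not $n$-truncated but, say, a type like $S^{n+2}$ with $K(\Z,n+1)$ as a fibre component. The cleanest is probably: $1 \to K(\Z, n+2)$, whose fibre $\Omega K(\Z,n+2) = K(\Z, n+1)$ is not $n$-truncated, and which is $(n{+}1)$-\'etale because $K(\Z,n+2)$ is $(n{+}1)$-connected so its $(n{+}1)$-truncation is $1$ and the naturality square $1 \to 1$, $K(\Z,n+2) \to 1$ is a pullback iff... hmm, that needs $K(\Z,n+2)$ itself to be the pullback, which it isn't. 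I expect the genuinely correct example to require a map whose codomain is already $(n{+}1)$-truncated, so that being $(n{+}1)$-\'etale reduces to the fibres being $(n{+}1)$-truncated (not $n$-truncated): e.g.\ a map $E \to B$ with $B$ an $(n{+}1)$-type and fibre $K(\pi, n+1)$.

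The main obstacle, and where I expect to spend real effort, is pinning down and verifying these concrete examples: computing which maps are $n$-\'etale versus $(n{+}1)$-\'etale requires understanding the naturality squares for $n$- and $(n{+}1)$-truncation simultaneously, and one must be careful because ``$(n{+}1)$-\'etale'' is a condition relating the $(n{+}1)$-truncation naturality square to a pullback, which interacts subtly with $n$-truncatedness of fibres. I would lean heavily on the characterization ``$(n{+}1)$-\'etale $=$ right orthogonal to $1 \to S^{n+2}$'' from \cite[Theorem~3.10]{CR} to turn \'etaleness into a lifting condition, and on the example maps built from Eilenberg–MacLane spaces and degree maps already appearing in \cref{eg:degk} and \cref{thm:strict}, since those are exactly the source of non-lexness for truncation. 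Once the two example maps are fixed, checking membership in each of the four relevant classes is routine homotopy-theoretic bookkeeping with fibre sequences and truncation, so the writeup will be short modulo getting the examples right.
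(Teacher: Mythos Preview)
Your approach for the first direction has a genuine gap: you cannot use $n$-truncation (or indeed any modality) to show that $\{L\text{-connected maps}\}^\bot \not\subseteq \{L'\text{-\'etale maps}\}$. When $L$ is a modality, $j_5$ is an equality by \cref{thm:L-modality}, so $\{L\text{-connected maps}\}^\bot = \{L\text{-local maps}\}$; and since $L'$ is then also a modality, $j_6$ exists by \cref{thm:main}\ref{it:L'-modality}, i.e.\ every $L$-local map \emph{is} $L'$-\'etale. Concretely, your proposed counterexample $\degg(k):S^1\to S^1$ with $L$ the $0$-truncation is in fact $1$-\'etale: $S^1$ is already $1$-truncated, so the $L'$-naturality square is $\degg(k)$ over $\degg(k)$, trivially a pullback. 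The only way to witness this non-inclusion is to choose $L$ so that $L'$ is \emph{not} a modality, as in \cref{eg:degk} (e.g.\ $L=L_{\degg(k)}$ for $k>1$).

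The paper's proof is quite different from your proposed strategy and much shorter: rather than exhibiting explicit maps, it argues contrapositively at the level of classes. If $\{L\text{-connected maps}\}^\bot \subseteq \{L'\text{-\'etale maps}\}$, then composing with $j_5$ gives $j_6$, forcing $L'$ to be a modality; \cref{eg:degk} supplies an $L$ where this fails. If $\{L'\text{-\'etale maps}\} \subseteq \{L\text{-connected maps}\}^\bot$, then applying $\prebot{(-)}$ and using $i_5$, $i_8$ forces $i_7$ to be an equality; \cref{thm:strict} (e.g.\ $n$-truncation) supplies an $L$ where this fails. Your second direction is salvageable along the lines you sketch---e.g.\ the map $K(\Z,n{+}1)\to 1$ is between $(n{+}1)$-types, hence $(n{+}1)$-\'etale, but is not $n$-truncated---but the paper's argument avoids having to verify any such example by hand.
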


Note that this is in contrast to the fact that after applying
$\prebot(-)$ we do get an inclusion of
$\prebot\{L'\text{-\'etale maps}\}$ into
$\prebot(\{L\text{-connected maps}\}^\bot) = \{L\text{-connected maps}\}$,
via the equality $i_8$ and the inclusion $i_7$.

\begin{proof}
  If $\{L\text{-connected maps}\}^\bot$ includes in $\{L'\text{-\'etale maps}\}$,
  then $j_6$ exists, which only happens when $L'$ is a modality by
  \cref{thm:main}\ref{it:L-modality}.
  We saw in \cref{eg:degk} an example of a reflective subuniverse $L$ such
  that $L'$ is not a modality.
  
  If $\{L'\text{-\'etale maps}\}$ includes in $\{L\text{-connected maps}\}^\bot$,
  then by applying $\prebot{(-)}$ we deduce that $i_7$ is an equality,
  which can fail by \cref{thm:main}\ref{it:strict}.
\end{proof}

\printbibliography[heading=bibintoc]

\cleardoublepage %
\section*{For quick reference and printing}

\setlist[enumerate,1]{label={(\arabic*)}}
\setlist[enumerate,2]{label={(\roman*)}}

\setboolean{end}{true}
{\def\label#1{} %

} 

\end{document}